\newtheorem{Theorem}{Theorem}[section]
\newtheorem{Lemma}[Theorem]{Lemma}
\newtheorem{Proposition}[Theorem]{Proposition}
\newtheorem{Corollary}[Theorem]{Corollary}
\theoremstyle{definition}
\newtheorem{Definition}[Theorem]{Definition}
\theoremstyle{remark}
\newtheorem{Remark}[Theorem]{Remark}
\newtheorem{Example}[Theorem]{Example}
\newcommand{\C}{\mathbb{C}}
\newcommand{\Nbb}{\mathbb{N}}
\newcommand{\Q}{\mathbb{Q}}
\newcommand{\g}{\mathfrak{g}}
\newcommand{\gl}{\mathfrak{gl}}
\newcommand{\h}{\mathfrak{h}}
\renewcommand{\t}{\mathfrak{t}}
\renewcommand{\l}{\mathfrak{l}}
\renewcommand{\sl}{\mathfrak{sl}}
\newcommand{\z}{\mathfrak{z}}
\newcommand{\m}{\mathfrak{m}}
\newcommand{\J}{\mathfrak{J}}
\renewcommand{\a}{\mathfrak{a}}
\newcommand{\orb}{\mathfrak{O}}
\newcommand{\N}{\mathcal{N}}
\renewcommand{\O}{\mathcal{O}}
\newcommand{\U}{\mathcal{U}}
\newcommand{\pr}{\operatorname{pr}}
\newcommand{\diff}{\operatorname{d}}
\newcommand{\Ad}{\operatorname{Ad}}
\newcommand{\tr}{\operatorname{tr}}
\newcommand{\Lie}{\operatorname{Lie}}
\newcommand{\spn}{\operatorname{span}}
\newcommand{\image}{\operatorname{Im}}
\newcommand{\reg}{{\operatorname{reg}}}
\newcommand{\Spec}{\operatorname{Spec}}
\newcommand{\Ind}{\operatorname{Ind}}
\newcommand{\GL}{\operatorname{GL}}
\newcommand{\SL}{\operatorname{SL}}
\newcommand{\Sp}{\operatorname{Sp}}
\newcommand{\PSL}{\operatorname{PSL}}
\newcommand{\id}{\operatorname{id}}
\newcommand{\Res}{\operatorname{Res}}
\newcommand{\Inn}{\operatorname{Inn}}
\newcommand{\se}{\operatorname{se}}
\title[\'Etale geometry of closures of Jordan classes]{\'Etale geometry of closures of Jordan classes}
\author[F. Ambrosio]{Filippo Ambrosio}
\address{FSU Jena, Fakult\"at f\"ur Mathematik und Informatik,\\ Ernst-Abbe-Platz 2, 07743 Jena (Germany)}
\email{filippo.ambrosio@uni-jena.de}
\subjclass[2020]{20G15, 17B45, 14L30} 
\keywords{Adjoint action, Jordan classes, decomposition classes, sheets, \'etale} 
\begin{document}

\begin{abstract}
Let $G$ be a connected reductive algebraic group with  simply connected derived subgroup.
Over the complex numbers there exists a local method to study the geometric properties of a point $g$ in the closure of a Jordan class of $G$ in terms of Jordan classes of a maximal rank reductive subgroup $M \leq G$ depending on the point $g$, and further to the closures of certain decomposition classes in $\Lie M$.
We adapt this method to the case of an algebraically closed field of characteristic $p$, and we give sufficient restrictions on $p$ for it to hold.
\end{abstract}

\maketitle


\subsection*{Acknowledgments}
The author would like to thank Giovanna Carnovale for suggesting the problem and Mauro Costantini, Francesco Esposito, George McNinch and Oksana Yakimova for helpful and stimulating discussions.
The author would also like to thank the anonymous referees for carefully reading the manuscript and for their suggestions.
This research work was mainly conducted at FSU Jena. 



\section{Introduction} \label{s_intro}
The conjugacy action of a connected reductive group $G$ on itself is a central problem in the study of algebraic Lie theory.
Suppose that the base field $k$ is algebraically closed of arbitrary characteristic $p$.
Then the multiplicative Jordan-Chevalley decomposition suggests a natural way to gather conjugacy classes into irreducible locally closed subsets of $G$, called  Jordan classes.
Elements in the same Jordan class have semisimple part  allowed to range over a subset of points with $G$-conjugate connected centralizer, while their unipotent parts must be conjugate in $G$.
This ensures that Jordan classes are finitely many smooth varieties consisting of conjugacy classes of the same dimension.
These varieties appear in many topics in representation theory of algebraic groups: they were introduced by Lusztig in \cite{LusztigICC} as the loci where character sheaves are locally constant.
Jordan classes also play a crucial role in the description of sheets of $G$, i.e., the irreducible components of subsets of $G$ consisting of elements with equidimensional classes \cite{ACE, ACEbad}.
We shortly illustrate the connection: the set of Jordan classes of $G$ is partially ordered by the relation $J_1 \preceq J_2$ if and only if $J_1 \subset \overline{J_2}^\reg$, where the regular closure $\overline{J_2}^\reg$ is the subset of all elements of $\overline{J_2}$ with centralizer of minimal dimension.
Sheets of $G$ are parametrized as $\overline{J}^\reg$ where $J$ runs over all Jordan classes $J \subset G$ which are maximal with respect to $\preceq$.
Moreover, Lusztig defined in \cite{LusztigCCRG} a map from $G$ to the set of irreducible representations of its Weyl group by means of truncated induction of Springer's representations for trivial local systems.
Each fibre of this map is called a stratum and the irreducible components  of a stratum are the sheets contained therein, see \cite{CarnoBul}.
All these instances give evidence to the importance of producing methods to study properties of Jordan classes, as well as of their regular closures.

The Lie algebra $\g$ of $G$ admits an additive Jordan-Chevalley decomposition, by means of which one can define objects analogous to Jordan classes, called decomposition classes or packets, see \cite{BK79, BroerDecoVar, BroerLectures, PremSt, Sp_bad}. 
Under the assumption that $p$ is very good for $G$ (see section \ref{s_intro}), decomposition classes are characterized as subsets of elements whose centralizer in $\g$ is $G$-conjugate (\cite[Theorem 3.7.1]{BroerLectures}); moreover, they are smooth (\cite[Corollary 3.8.1]{BroerLectures}), just like Jordan classes.

Therefore, possibly with some restrictions on $p$, it is expected that the behaviour of decomposition classes mirrors somehow the situation in $G$.
In particular, in \cite{ACE} an approach to study the geometric properties of the closure of a Jordan class was elaborated in the case in which $G$ is complex semisimple simply connected.

The principal aim of this paper is to adapt the method in \cite{ACE} to the wider generality of the case $p \neq 0$.
We briefly describe the procedure.
 For a Jordan class $J \subset G$ and an element with Jordan decomposition $su \in \overline{J}$, there is a smooth equivalence between the pointed varieties $(\overline{J}, su)$ and $(X, su)$, where $X$ is given by a  union of closures of Jordan classes in the centralizer $C_G(s) \leq G$ that are determined by the data of $J$ and $su$. 
In \cite{ACE} this was possible thanks to the construction of an \'etale neighbourhood of $s$ in the style of Luna \cite{Luna}; in the current manuscript we use a characteristic-free version of Luna's result by Bardsley--Richardson \cite{BR}.
We thus obtain a local description of $\overline{J}$ and $\overline{J}^\reg$ (see Theorem \ref{th_ACE}) as a natural extension of the first main result of \cite{ACE} so that the attention is drawn particularly on closures of Jordan classes around unipotent elements.

As explained always in \cite[\S 5]{ACE}, if $k = \C$, the exponential map induces an analytic isomorphism between neighbourhoods of the unipotent variety of a reductive group $G$ and of the nilpotent cone of $\g$.
Generalizations of the exponential map to the case $p > 0$ can be found in \cite{Sobaje, SobTay}, but in our scope the best substitute seems to be a log-like map \`a la Bardsley--Richardson \cite{BR, McN_NOgood}.
The restriction of a log-like map to its \'etale locus allows to reduce the study of the geometry of a unipotent element in the closure of a Jordan class in $G$ to the study of a nilpotent element in the closure of a decomposition class in $\g$, see Corollary \ref{cor_se}.
This technique is the main novelty of the paper and is affected by typical phenomena of positive characteristic, some of which are illustrated in the final part of the manuscript. 

\subsection{Structure of the paper}
In section \ref{s_notation} we fix notation and recall the concepts from the theory of algebraic varieties and algebraic groups needed in the rest of the exposition.
Section \ref{s_jordan} introduces decomposition and Jordan classes together with their main properties; in particular, we recall their role in the description of sheets of adjoint orbits, resp. of conjugacy classes.
The first results of the manuscript are Theorem \ref{th_ACE} and its corollaries, contained in section \ref{s_redunip}.
Since the main ideas therein come from \cite{ACE}, we have tried to avoid repetitions and to highlight only the arguments of the proofs which diverge from the case $k = \C$.
Finally, section \ref{s_br} introduces the notion of a log-like map and recalls the main examples from \cite{BR, McN_NOgood}.
The new aspect in the approach of this paper is illustrated  in part \ref{ss_compatible} where compatibility of a log-like map with the Jordan stratification on an \'etale neighbourhood of the unipotent variety is explained, leading to the second main result of the paper, Corollary \ref{cor_se}.
The precise extents to which the methods of \cite{ACE} can be extended in the case $p > 0$ are explained in \ref{sss_summary}.
The manuscript closes with some examples and applications: in particular, we give a new proof of the fact that sheets of $\GL_n(k)$ are smooth, already mentioned in \cite{LusztigICC}. 

\section{Notation and recollections} \label{s_notation}
Throughout the paper we denote by $k$ an algebraically closed field and we set $p = 0$ or a prime to be the characteristic of $k$.
We will work with algebraic groups and varieties defined over $k$, omitting the ground field when clear from the context.
For an algebraic group $H$, we denote by $Z(H)$ its centre and by $H^\circ$ the connected component of $H$ containing the identity $e$ of $H$.

\subsection{Notions from algebraic geometry}
For a variety $X$, its algebra of regular functions will be denoted $k[X]$ and for $f \in k[X]$ we set $\mathcal{Z}(f) = \{ x \in X \mid f(x) = 0 \}$.
Let $X,Y$ be varieties and let $\varphi \colon X \to Y$ be a morphism of varieties.
We denote with $\varphi^* \colon k[Y] \to k[X]$ its pull-back.
For $x \in X$, we write $\diff_x \varphi \colon T_x X \to T_{\varphi(x)} Y$ for the differential of $\varphi$ at $x$.
For $x \in X$, the morphism $\varphi$ is said to be \emph{\'etale at $x$}  if $\varphi$ is smooth and unramified at $x$.
When $x \in X$ and $\varphi(x) \in Y$ are smooth points, then the map $\varphi$ is \'etale at $x$ if and only if $\diff_x \varphi \colon T_x X \to T_{\varphi(x)} Y$ is an isomorphism.
The \'etale locus of $\varphi$ is the subset of points of $X$ where $\varphi$ is \'etale and it is an open subset of $X$.

Fix two points $x \in X, y \in Y$. Following \cite[\S 1.7]{Hess}, we say that the pointed variety $(X,x)$ is \emph{smoothly equivalent} to the pointed variety $(Y,y)$ if there exist a variety $Z$, a point $z \in Z$ and two morphisms
$\varphi_X \colon  Z \to X$ and $\varphi_Y \colon  Z \to Y$ s.t. $\varphi_X(z) = x, \varphi_Y(z) = y$ and $\varphi_X, \varphi_Y$ are  smooth at $z$.
We write briefly $(X,x) \sim_{\se} (Y,y)$.
This defines an equivalence relation on the set of pointed varieties which preserves the property of the variety being unibranch, normal, Cohen-Macaulay or smooth at the fixed point.
This fact follows from the local structure of a smooth morphism (see \cite[Proposition 3.24 (b)]{Milne}): the proof for normality can be found for example in \cite[Lemma 3.2]{XS}, but the argument can be adapted to all of the above properties.

\subsection{Group actions on varieties}
Let $X$ be a variety with a rational $H$-action:
$(h,x) \mapsto h \cdot x$ for $h \in H, x \in X$; we write $H \cdot x$ for the $H$-orbit of $x$ in $X$.
If $Y \subset X$, we set $H \cdot Y = \bigcup_{y \in Y} H \cdot y$.
For $n \in \Nbb$, we define the level set $X_{(n)} := \{ x \in X \mid \dim H \cdot x = n \}$;
the variety $X$ is partitioned into its level sets, which are locally closed. 
For $Y \subset X$, we write $Y^\reg := Y \cap X_{(n)}$ where $n$ is maximal with the property that $Y \cap X_{( n)} \neq \varnothing$.
Irreducible components of nonempty level sets of $X$ are called the \emph{sheets} of $X$ (for the action of $H$).

Suppose now that $H^\circ$ is reductive and that $X$ is affine.
The subalgebra of invariants $k[X]^H$ is finitely generated, and we denote by $X/\!/H := \Spec (k[X]^H)$ the categorical quotient of $X$ by $H$; this affine variety comes with a  surjective morphism of varieties $\pi_{X, H} \colon X \to X/\!/H$ (or just $\pi_H$ when there is no confusion) induced by the inclusion of algebras $k[X]^H \to k[X]$. 
A short summary on properties in a characteristic-free setting of $X/ \! /H$ and $\pi_{X,H}$ can be found in \cite[\S 2]{BR}, for a complete and classical reference see \cite{MumGit}.
We recall that in this generality, if $Y \subset X$ is closed and $H$-stable, then the closed subset $\pi_{X, H}(Y) \subset X/\!/ G$ is not necessarily isomorphic to $Y/\!/G$, and we only have a closed embedding $Y/\!/G \to \pi_{X, H}(Y)$.

Agreeing with terminology of \cite{Luna},  a subset $S \subset X$ is said to be $\pi_{X}$-saturated (or just saturated when the map is clear from the context) if $S = \pi_{X}^{-1}(\pi_{X}(S))$; it can be shown that $S$ is saturated if and only if, for all $x \in S$ also $y \in S$ for all $y \in X$ s.t. $\pi_X(x) = \pi_X(y)$, equivalently s.t. $\overline{H \cdot x} \cap \overline{H \cdot y} \neq \varnothing$.

We recall the construction of the associated fibre bundle.
Let $K \leq H$ be an algebraic subgroup with $K^\circ$ reductive and let $X$ be an affine $K$-variety.
Consider the product $H \times X$, we let $K$ act freely on $(h,x) \in H \times X$ via $(hg^{-1}, g\cdot x), g \in K$.
All $K$-orbits in $H \times X$ being closed, the quotient space agrees with the categorical quotient
 $(H \times X) /\!/ K$, which in this case is  denoted $H \times^K X$ and the class of the element $(h,x) 
 \in H \times X$ is denoted $h * x$.
The group $H$ defines a left action on $H \times^K X$ via left multiplication on the first component and there is a natural  $H$-equivariant projection $H \times^K X \to H/K$ with fibre isomorphic to $X$.
The isomorphism of $k$-algebras $k[X]^K \cong (k[H] \otimes k[X])^H$ (see \cite[\S 5.3.3]{BR}) implies in particular  that closed $H$-stable subsets of $H \times^K X$ correspond bijectively to closed $K$-stable subsets of $X$.
For further information on homogeneous fibre bundles, the reader is referred to \cite{Se}, a summary of the needed results can also be found in \cite[\S 5]{BR}.

\subsubsection{Conjugacy and adjoint action}
We will be concerned with two actions in particular.
The algebraic subgroup $K \leq H$ acts on $H$ via conjugation; the $K$-conjugacy class of the element $h \in H$ will be denoted 
$\O^K_h = \{ ghg^{-1} \mid g \in K\} \subset H$.
For a subset $S \subset H$ and $g \in K$ we will write briefly $g S g^{-1} = \{gsg^{-1} \mid s \in S \} \subset H$ and $K \cdot S = \bigcup_{s \in S} \O^K_s$. 
We also write $C_K(S) = \{g \in K \mid g \cdot s = s \mbox{ for all } s \in S \}$; if $S = \{h\}$ is a singleton, we write briefly $C_K(\{h\}) = C_K(h) = K_h$.

For $\h := \Lie H$, we will consider the adjoint action of $H$ on $\h$ defined by $\Ad \colon H \to \GL(\h), h \mapsto \Ad(h) = \diff_e \Inn(h)$, where $\Inn(h) \colon H \to H, g \mapsto hgh^{-1}$.
For $x \in \h, h \in H$, we use notations $h \cdot x := \Ad(h)(x)$ and $\orb^H_x := \{h \cdot x \mid h \in H \}$.
Moreover, we write $H_x := C_H(x) = \{h \in H \mid h \cdot x = x\}$.

\subsection{Reductive groups and their Lie algebras}
We denote with $G$ a connected reductive algebraic group over $k$.
We fix a Borel subgroup $B \leq G$ and a maximal torus $T \leq B$.
We denote with $W$ the Weyl group of $G$ with respect to $T$, with $\Phi$  the root system of $G$ with respect to $T$ and with $\Delta = \{ \alpha_1, \dots, \alpha_\ell\} \subset \Phi$ the system of simple roots  corresponding to $B$. 
For $\alpha \in \Phi$, we fix a root subgroup $U_\alpha$ whose elements will be sometimes described as the image of a suitable group morphism $u_\alpha \colon k \to G$.
With an abuse of terminology, a Levi subgroup of $G$ is understood to be the Levi factor of a parabolic subgroup of $G$.

The Lie algebra of $G$ will be denoted $\g$ and for a closed subgroup $M \leq G$, we will use the corresponding fraktur lower case letter $\m$ to denote its Lie algebra. 
It is known that $\g$ is a restricted Lie algebra (or Lie $p$-algebra, as defined by Jacobson \cite{Jacobson}).
An element of $\g$ is said to be semisimple (resp. nilpotent) if it belongs to the Lie algebra of a  torus (resp. unipotent subgroup) of $G$.
Any element $x \in \g$ admits a unique Jordan decomposition as a sum $x = x_s + x_n$ with $x_s \in \g$ semisimple, $x_n \in \g$ nilpotent and $[x_s , x_n] = 0$, see \cite{Bo}.
Moreover if $G \leq \GL_n(k)$ for some $n$, then the Jordan decomposition of an element in $\g$ agrees with its Jordan decomposition  as a linear endomorphism of $k^n$.

When we write $g = su \in G$ or $x = x_s + x_n \in \g$ we implicitly assume that
$su$ ($x_s +x_n$, respectively) is the Jordan decomposition of $g$ ($x$, respectively), with
$s$ semisimple and $u$ unipotent ($x_s$ semisimple and $x_n$ nilpotent, respectively).

Suppose $\Phi$ is irreducible, we say that $p$ is bad for $\Phi$ if $p =2$ and $\Phi$ is not of type $\sf A$, $p=3$ and $\Phi$ is not classical, or $p=5$ and $\Phi$ is of type $\sf E_8$. Otherwise, we say that $p$ is good for $\Phi$. If  $p$ is good for $\Phi$ and $p \not\mid (d +1)$ for $\Phi$ of type ${\sf A}_d$, we say that $p$ is very good for $\Phi$.
Analogously, we say that $p$ is torsion for $\Phi$ if $p = 2$ and $\Phi$ is not of type $\sf A$ or $\sf C$, $p = 3$ and $\Phi$ is exceptional not of type $\sf G_2$, or $p = 5$ and $\Phi$ is of type $\sf E_8$.
The following terminology is nowadays standard, see for example \cite[\S 14]{MalleTesterman}.
We say that $p$ is \emph{good} (resp. \emph{very good}) for $G$ if it is good (resp. very good) for all the irreducible components of the root system of $G$. 
We say that $p$ is \emph{separably good} for $G$ if $p$ is good for $G$ and $[G,G]$ is separably isogenous to its simply connected cover.
Finally 
$p$ is said to be \emph{torsion} for $G$ if $p$ is torsion for some 
irreducible component of $[G,G]$ or $p$ divides the order of the fundamental group of $G$, see \cite{St} for a detailed analysis of torsion primes.
The condition $p$ very good is stronger than $p$ separably good, which in turn is stronger than $p$ good.

We denote by $\U(G)$ the unipotent variety of $G$ and by $\N(\g)$ the nilpotent cone of $\g$.
A Springer isomorphism for $G$ is a $G$-equivariant isomorphism $\U(G) \cong \N(\g)$; for an overview on this topic see \cite[\S 6.20]{HuCCSAG} and \cite[\S 1.15]{Ca2}.
The existence of a Springer isomoprhism is guaranteed, for example, under the assumption that $p$ is separably good for $G$, see \cite[\S 4]{McN_optimal} and \cite[\S 2.4]{McNTes}, building on results from \cite{Springer_unipotent} and \cite[\S 9.3]{BR}.

\subsubsection{Centralizers of semisimple elements}
Let $s \in T$, then the structures of $G_s$ and $G_s^\circ$ are known, see \cite{St}; in particular, 
$G_s^\circ  = \langle T , U_{\pm \alpha} \mid \alpha \in \Phi_s \rangle$, where $\Phi_s := \{\alpha \in \Phi \mid \alpha(s) = 1\}$.
Such a group is reductive and sometimes called a standard pseudo-Levi subgroup, see \cite{SommersBC}.
For $x_s \in \t$ the paper \cite{St} contains analogous results about $G_{x_s}$ and $G_{x_s}^\circ$, in particular  $G_{x_s}^\circ  = \langle T , U_{\pm \alpha} \mid \alpha \in \Phi_{x_s} \rangle$, where 
$\Phi_{x_s} = \{\alpha \in \Phi \mid \diff_e \alpha(x_s) = 0\}$.
Moreover, we will use the following results, both due to Steinberg:
\begin{enumerate}
\item If $[G,G]$ is simply connected, then $G_s^\circ = G_s$ for all $s \in T$ (\cite{St}, see also \cite[Theorem 3.5.6]{Ca2}).
\item  $p$ is not a torsion prime for $G$ if and only if $G_{x_s}^\circ = G_{x_s}$ for all $x_s \in \t$ (\cite[Theorem 3.14]{St}).
\end{enumerate}

We recall Slodowy's criterion \cite[\S 3.5, Proposition]{Sl}, see also \cite[VI, 1, 7, Propostion 24]{Bour4}:
a root subsystem $\Psi \subset \Phi$ admits a base  contained in $\Delta$ if and only if it is rationally closed, i.e. $\left( \spn_\Q \Psi  \right) \cap \Phi = \Psi$.
Applying this result to the cases $\Psi = \Phi_s$ and $\Psi = \Phi_{x_s}$ we obtain that
$G_{s}^\circ$ (resp. $G_{x_s}^\circ$) is a Levi subgroup if and only if $\Phi_s$ (resp. $\Phi_{x_s}$) is rationally closed.
In particular, if $p$ is good for $G$, then $G_{x_s}^\circ$ is a Levi subgroup for all $x_s \in \t$, see \cite[Proposition 2.7.1]{Letellier}.

Since semisimple orbits are separable, we have the equalities (\cite[Proposition 1.5]{BoSp})
\begin{eqnarray*}
\Lie G_s^\circ = \Lie G_s = \g_s := \{x \in \g \mid \Ad(s) x = x \}; \\ 
\Lie G_{x_s}^\circ = \Lie G_{x_s} = \g_{x_s} := \{x \in \g \mid [x,s] = 0\}.
\end{eqnarray*}

\section{Jordan equivalence} \label{s_jordan}
In this section $G$ is a connected reductive algebraic group.

\subsection{Decomposition classes of a Lie algebra}
The Jordan decomposition in $\g$ allows to define an equivalence relation on $\g$: two elements $x = x_s + x_n$ and $y = y_s + y_n$ of $\g$ are said to be \emph{Jordan equivalent } if and only if there exists $g \in G$ such that $\g_{g \cdot x_s} = \g_{y_s}$ and $ g \cdot x_n = y_n$.
The equivalence classes of $\g$ with respect to this equivalence relation are called \emph{packets} or \emph{decomposition classes (Zerlegungsklassen)} of $\g$.
For $x \in \g$, we denote by $\J(x)$ the packet containing $x$.
If $x = x_s + x_n$, we have $\J(x) = \Ad(G)(\z(\g_{x_s})^\reg + x_n) =: \J(\g_{x_s}, \orb^{G_{x_s}^\circ}_{x_n})$, hence packets are irreducible varieties consisting of isodimensional adjoint orbits.
The packets of $\g$ are indexed by $G$-conjugacy classes of pairs $(\l, \orb^L)$, where $L = G_{x_s}^\circ$ for a semisimple element $x_s \in \g$, and $\orb^L$ is a nilpotent adjoint $L$-orbit in $\l$. 
This implies there are finitely many packets in $\g$; we denote by $\mathcal{J}(\g)$ the set of packets of $\g$.
The definition of a decomposition class was introduced by Borho and Kraft \cite{BK79} for complex reductive Lie algebras;  Spaltenstein \cite{Sp_bad} observed that the definition makes sense under the more general assumptions of this section.

Assume that $p$ is good for $G$, so that $G_{x_s}^\circ$ is a Levi subgroup for all semisimple elements $x_s \in \g$.
Then we can describe the closure (resp. the regular locus of the closure) of a packet in terms of parabolic induction, a procedure parallel to the one defined in \cite{LS79} for the group case.
For $x = x_s + x_n \in \g$ set $L := G_{x_s}^\circ$; we have:
\begin{align*}
\overline{\J(x)} = \bigcup_{x_z \in \z(\l)} \overline{\Ind_{\l}^{\g} \orb^L_{ x_z + x_n} }; \qquad
\overline{\J(x)}^\reg = \bigcup_{x_z \in \z(\l)} \Ind_{\l}^{\g} \orb^L_{ x_z + x_n}.
\end{align*}
It is immediate to check that $\overline{\J(x)} \cap \N(\g) \neq \varnothing$ and $\overline{\J(x)}^\reg \cap \N(\g) = \Ind_{\l}^\g \orb^L_{x_n}$.
The closure description also implies that if $\J \subset \g$ is a packet, both $\overline{\J}$ and $\overline{\J}^\reg$ are unions of packets and the inclusion relation defines a poset structure on the set of packets of $\g$.
We have $\J = \overline{\J}$ if and only if $\J = \J(\g, \{0\}) = \z(\g)$ and
a packet $\J' \subset \overline{\J}^\reg$ satisfies $\overline{\J'} \cap \overline{\J}^\reg = \J'$ if and only if $\overline{\J'}^\reg = \J'$ if and only if $\J' = \z(\g) + \orb$, with $\orb = \overline{\J}^\reg \cap \N(\g)$.

When $p$ is bad for $G$, then $G_{x_s}^\circ$ is not necessarily a Levi subgroup of $G$ for $x_s \in \g$ semisimple.
The author could not find in literature a uniform description of the closure (resp. regular closure) of a decomposition class by means of induction of orbits in such cases.
Nevertheless, some information on packets in Lie algebras of simple groups in bad characteristics can be found in \cite{Sp_bad, Sp_f4}.

\subsection{Jordan classes of a group}
Two elements $g_1 = su, g_2 = rv \in G$ are said to be \emph{Jordan equivalent } if and only if there exists $g \in G$ such that $G_{gs g^{-1}}^\circ = G_{r}^\circ$, $g Z(G_s^\circ)^\circ s g^{-1} = Z(G_r^\circ)^\circ r$ and $ g u g^{-1} = v$.
The equivalence classes of $G$ with respect to this relation are called \emph{Jordan classes} of $G$ and were introduced by Lustzig \cite{LusztigICC}.
We use the notation $J(g)$ for the Jordan class of $g \in G$.
If $g = su $, we have $J(g) = G \cdot ((Z(G_s^\circ)^\circ s)^\reg u) =: J(G_s^\circ, Z(G_s^\circ)^\circ s, \O^{G_s^\circ}_u)$, so that $J(g)$ is irreducible and  contained in a level set.
Jordan classes of $G$ are parametrized by $G$-conjugacy classes of triples $(M, Z(M)^\circ s, \O^M)$, where $M$ is the connected centralizer of a semsimple element in $G$, the connected component $Z(M)^\circ s$ satisfies $C_G(Z(M)^\circ s)^\circ = M$ and $\O^M$ is a unipotent class in $M$.
We denote by $\mathcal{J}(G)$ the set of Jordan classes of $G$; in particular, $\mathcal{J}(G)$  is finite.
The closure (resp. the regular locus of the closure) of a Jordan class can be described via parabolic induction of unipotent classes; for $g = su \in G$ we have:
\begin{align*}
\overline{J(g)} = \bigcup_{z \in Z(G_s^\circ)^\circ s} \overline{G \cdot \left( z\Ind_{G_s^\circ}^{G_z^\circ} \O^{G_s^\circ}_u \right)}; \qquad
\overline{J(g)}^\reg = \bigcup_{z \in Z(G_s^\circ)^\circ s} G \cdot \left( z\Ind_{G_s^\circ}^{G_z^\circ} \O^{G_s^\circ}_u \right).
\end{align*}
One has $\overline{J} \cap \U(G) \neq \varnothing$ if and only if $J = J(L, Z(L)^\circ, \O^L)$ where $L$ is a Levi subgroup.
Closures and regular closures of Jordan classes decompose as unions of Jordan classes and we have a poset structure on the set of Jordan classes, as in the Lie algebra case.
We have $J(g) = \overline{J(g)}$ if and only if $J(g) = Z(G)^\circ \O^G_g$ with $\O^G_g$ a semisimple isolated class, with terminology from \cite{LusztigICC}.
A Jordan class $J'$ contained
in $\overline{J}^\reg$
 is closed therein if and only if $\overline{J'}^\reg
=J'$  if and only if $J' = J'(rv)$ with $r$ isolated.
We refer to \cite[\S 3]{ACE} for further reading, and we remark that Proposition 3.1 therein holds independently of the characteristic of the base field.
In particular we will need the formula for the dimension of the Jordan class $J(su)$:
\begin{equation} \label{dim_form}
\dim J(su) = \dim G \cdot su + \dim Z(G_s^\circ).
\end{equation}
This can be computed as in the proof of \cite[Theorem 5.6 (e)]{CE1}, which works independently of the characteristic of $k$.

We prove an auxiliary result that we will need later.

\begin{Lemma} \label{lem_jnghd}
Let $x \in G$ and let $\pi_G \colon G \to G/\!/G$ be the categorical quotient.
Then $$U(x):= \bigcup_{J \in \mathcal{J}(G), \; x \in \overline{J}} J$$ is a $G$-stable open subset of $G$.

Moreover, if $x$ is semisimple, the subset $U(x)$ is $\pi_G$-saturated.
\end{Lemma}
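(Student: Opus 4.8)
The plan is to show that the complement of $U(x)$ in $G$ is closed and $G$-stable, and then to handle saturation separately. First I would observe that $U(x)$ is manifestly $G$-stable: each Jordan class $J$ is $G$-stable, hence so is $\overline J$, so the condition $x \in \overline J$ is equivalent to $g\cdot x \in \overline J$ for all $g \in G$ only after replacing $x$ by $g\cdot x$—more precisely, $x \in \overline J \iff \O^G_x \subset \overline J$, and this is what matters. So $U(x)$ is a union of Jordan classes. For openness, I would use that $\mathcal J(G)$ is finite (recalled in the excerpt), so it suffices to show that the union $Y(x) := \bigcup_{J \in \mathcal J(G),\, x \notin \overline J} J$ is closed. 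The key point is the poset structure on $\mathcal J(G)$: since $\overline J$ is itself a union of Jordan classes, $J' \subset \overline J$ implies $\overline{J'} \subset \overline J$. Therefore if $x \notin \overline J$ and $J'' \subset \overline J$, we cannot immediately conclude $x \notin \overline{J''}$; rather the correct statement is: if $J'$ appears in $Y(x)$ (i.e. $x\notin\overline{J'}$) then every Jordan class in $\overline{J'}$ also satisfies $x \notin \overline{\,\cdot\,}$, because $\overline{J''} \subset \overline{J'}$ for $J'' \subset \overline{J'}$. Hence $\overline{J'} \subset Y(x)$ for each $J'$ occurring in $Y(x)$, which shows $Y(x)$ is a finite union of closed sets, hence closed; thus $U(x) = G \setminus Y(x)$ is open.

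Next I would verify that $U(x)$, as described, coincides with $G \setminus Y(x)$, i.e. that every Jordan class is either entirely inside $U(x)$ or entirely inside $Y(x)$; this is the dichotomy "$x \in \overline J$ or $x \notin \overline J$", which is tautological once one fixes that membership only depends on $J$ and not on the representative. Combined with the previous paragraph this gives that $U(x)$ is a $G$-stable open subset of $G$.

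For the saturation claim, assume $x$ is semisimple. I would use the characterization of saturated sets recalled in the excerpt: $S$ is $\pi_G$-saturated iff for all $g \in S$, every $g'$ with $\overline{\O^G_g} \cap \overline{\O^G_{g'}} \neq \varnothing$ also lies in $S$. So take $g = su \in U(x)$, i.e. $x \in \overline{J(g)}$, and suppose $g' = s'u'$ has $\overline{\O^G_g} \cap \overline{\O^G_{g'}} \neq \varnothing$. For conjugacy classes, closures meeting means they share the closed orbit in the fibre, which is the semisimple class; concretely, up to conjugacy $\O^G_{s'} = \O^G_s$, i.e. $s$ and $s'$ are $G$-conjugate (this is where semisimplicity of the fibre's closed orbit enters, and it is standard—$\pi_G$ separates semisimple classes). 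After conjugating, assume $s = s'$. Now I need: $x \in \overline{J(su)} \implies x \in \overline{J(su')}$. Since $x$ is semisimple and $x \in \overline{J(su)}$, using the closure formula $\overline{J(su)} = \bigcup_{z \in Z(G_s^\circ)^\circ s} \overline{G\cdot(z\,\Ind_{G_s^\circ}^{G_z^\circ}\O^{G_s^\circ}_u)}$ and the fact that a semisimple element in such a closure must lie in the "central" part, one extracts that $x$ is $G$-conjugate to some $z \in \overline{Z(G_s^\circ)^\circ s}$ — a condition that does not see the unipotent part $u$ at all. Hence the same $z$ witnesses $x \in \overline{J(su')}$, so $g' \in U(x)$. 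This proves $U(x)$ is $\pi_G$-saturated.

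\textbf{Main obstacle.} The delicate step is the last one: extracting from $x \in \overline{J(su)}$ with $x$ semisimple that the witnessing data is independent of the unipotent part. Concretely one must show a semisimple element of $\overline{J(su)}$ is $G$-conjugate to an element of $\overline{Z(G_s^\circ)^\circ s}$, which requires knowing that the semisimple part of a limit of elements of $J(su)$ lies (up to conjugacy) in the closure of $Z(G_s^\circ)^\circ s$ and that the induced unipotent class contributes nothing semisimple beyond the central torus. This uses properness/semicontinuity of the Jordan decomposition along the map $\pi_G$ together with the explicit induction description of $\overline{J(su)}$; I would isolate it as the technical heart of the argument, while the openness part is essentially formal from finiteness of $\mathcal J(G)$ and the poset structure.
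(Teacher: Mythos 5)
Your proof is correct and follows essentially the same route as the paper: the openness argument (the complement $\bigcup_{x \notin \overline{J}} J$ is a finite union of Jordan-class closures, hence closed) is identical, and the saturation argument rests on the same key observation that, for $x$ semisimple, the condition $x \in \overline{J(su)}$ depends only on the $G$-conjugacy class of the semisimple part $s$ and not on $u$. The ``technical heart'' you flag at the end is precisely the characteristic-free closure criterion the paper invokes from \cite[Proposition 3.1]{ACE} (up to conjugation, $J(x) \subset \overline{J(su)}$ if and only if $G_x^\circ \supset G_s^\circ$ and $Z(G_x^\circ)^\circ x \subset Z(G_s^\circ)^\circ s$), so your sketch of that step via the closure formula and the fact that fibres of $\pi_G$ are determined by semisimple classes closes the same gap the paper closes by citation.
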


\begin{proof}
We prove that $ G \setminus U(x) = \bigcup_{x \notin \overline{J}} J$ equals its closure.
Let $ y \in  \overline{\bigcup_{x \notin \overline{J}} J} = \bigcup_{x \notin \overline{J}} \overline{J}$, where the union is finite.
Closures of Jordan classes are unions of Jordan classes, thus $J(y) \subset \bigcup_{x \notin \overline{J}} \overline{J}$ , namely $J(y) \subset \overline{J(z)}$ for some $z \in G$ with $x \notin \overline{J(z)}$.
In particular, $\overline{J(y)} \subset  \overline{J(z)}$ and $x \notin \overline{J(y)}$.
Hence, $y \in J(y) \subset \bigcup_{x \notin \overline{J}} J$.

Assume $x$  semisimple. 
Let $y=su \in U(x)$ and $z = tv \in G$ with $\pi_G(y) = \pi_G(z)$.
We have $J(x) \subset \overline{J(y)}$ if and only if $G_x^\circ \supset G_t^\circ$ and $Z(G_x^\circ)^\circ x \subset Z(G_t^\circ)^\circ t$ hold, up to conjugation by an element of $G$.
Now $s$ is $G$-conjugate to $t$, thus $J(x) \subset \overline{J(z)}$.
\end{proof}

Finally we recall the following facts.
\begin{Lemma}[{{\cite[Corollary 4.5]{ACE}}}] \label{lem_geom}
Let $J \in \mathcal{J}(G)$ and let $x,y \in \overline{J}$ with $J(x) = J(y)$.
Then $(\overline{J}, x) \sim_{\se} (\overline{J}, y)$.
\end{Lemma}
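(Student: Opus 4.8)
The plan is to reduce the statement to the local-to-global machinery already set up in the paper, following the strategy of \cite{ACE}. The statement to prove is: for $J \in \mathcal{J}(G)$ and $x, y \in \overline{J}$ with $J(x) = J(y)$, one has $(\overline{J}, x) \sim_{\se} (\overline{J}, y)$. Since $\sim_{\se}$ is an equivalence relation, it suffices to handle the case where $y$ lies in a convenient normal form relative to $x$: namely, write $x = su$ and recall that $J(x) = G \cdot \bigl((Z(G_s^\circ)^\circ s)^\reg u\bigr)$, so every point of $J(x)$ is $G$-conjugate to a point of the form $zu$ with $z \in (Z(G_s^\circ)^\circ s)^\reg$. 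As $\sim_{\se}$ is preserved by the $G$-action on $(\overline{J}, -)$ (conjugation by a fixed $g \in G$ is an automorphism of $\overline{J}$ carrying $x$ to $g \cdot x$), it is enough to prove that $(\overline{J}, su) \sim_{\se} (\overline{J}, zu)$ for all $z \in (Z(G_s^\circ)^\circ s)^\reg$, i.e. to show that moving the semisimple part within its central torus coset does not change the smooth-equivalence class.

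First I would introduce the connected reductive subgroup $M := G_s^\circ = C_G(Z(G_s^\circ)^\circ s)^\circ$ and the central torus $Z := Z(M)^\circ$, noting that $s, z \in Zs$ and $zu, su$ both have the same $M$-unipotent part $u$. The idea is to construct a single auxiliary variety $Z'$ with a point $z'$ and two smooth morphisms to $(\overline{J}, su)$ and $(\overline{J}, zu)$ simultaneously. The natural candidate comes from the associated fibre bundle construction recalled in the Notation section: one considers $G \times^{M} V$, where $V$ is an $M$-stable locally closed piece of $M$ (a union of closures of Jordan classes of $M$ meeting the relevant coset $Zs$), and the multiplication-type map $G \times^M V \to G$, $g * v \mapsto g v g^{-1}$, whose image contains $\overline{J}$ near the semisimple locus. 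The key point, which is exactly the content of the local model underlying Theorem \ref{th_ACE} and Lemma \ref{lem_jnghd}, is that an étale slice à la Bardsley--Richardson around the semisimple element produces, on a $\pi_G$-saturated open neighbourhood of $s$ (and likewise of $z$, since $z$ and $s$ are even in the same fibre of $\pi_G$ restricted to $Zs$, being both $G$-regular there and generating the same connected centralizer), a smooth morphism from that bundle onto $\overline{J}$. Because the slices at $s$ and at $z$ can be taken inside the \emph{same} $M$-variety $V$ (the closure-of-Jordan-classes datum depends only on $M$ and the pair $(Z, \O^M_u)$, not on the chosen point of $Zs$), the two resulting smooth morphisms share a common source $Z'$ and a common point $z'$ mapping to $su$ on one projection and to $zu$ on the other.

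The concrete execution would run: (1) Let $U(s) \subset G$ be the $\pi_G$-saturated $G$-stable open set of Lemma \ref{lem_jnghd}; since $z \in Zs \subset U(s)$ (both $s$ and $z$ lie in closures of the same Jordan classes, as $G_z^\circ = G_s^\circ = M$ and $Z(G_z^\circ)^\circ z = Zs \ni s$), the same open set serves for $z$. (2) Apply the Bardsley--Richardson étale slice theorem at $s$: there is an affine $M$-variety $S$ with $\pi_M$-saturated open piece, a point $\bar s \in S$, and a strongly étale $G$-morphism $G \times^M S \to U(s)$ sending a class over $\bar s$ to $s$; the preimage of $\overline{J}$ inside $G \times^M S$ is $G \times^M V$ for the union $V \subset S$ of closures of Jordan classes of $M$ determined by the datum of $J$, exactly as in Theorem \ref{th_ACE}. (3) Observe that $zu \in U(s)$ has a preimage in $G \times^M V$: indeed $z * u$ (interpreting $u$ as lying in the slice $S$, which contains a copy of a neighbourhood of $u$ in $M$) maps into $\overline{J}$, and by construction $z$ and $s$ lie in the same $M$-coset $Zs \subset S$. (4) Now $G \times^M V$ carries a canonical $Z$-action translating the coset $Zs$, or more simply: the projection $\varphi_X \colon G\times^M V \to \overline{J}$ is smooth at the point $e * (su)$ mapping to $su$, and is also smooth at the point $e*(zu)$ mapping to $zu$ (étale locus is open and $G$-stable, and both points lie over the semisimple-regular locus where the slice map is étale). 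So taking $Z' := G \times^M V$ with the \emph{two} points $e*(su)$ and $e*(zu)$ won't directly give a single point mapping to both; instead one uses a path: since $(Zs)^\reg$ is irreducible and the restriction of $\varphi_X$ is étale along the whole fibre over it, a standard connectedness argument (or, cleaner, the observation that translation by $Z$ on the slice $S$ is an $M$-equivariant automorphism inducing a $G$-equivariant automorphism of $G \times^M S$ carrying $e*(su)$ to $e*(zu)$ and descending compatibly on $\overline{J}$ — this is precisely the argument of \cite[Corollary 4.5]{ACE}) shows $(\overline{J}, su) \sim_{\se} (\overline{J}, zu)$.

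The hard part will be step (4): verifying that the $Z$-translation on the slice $S$ really does descend to a well-defined automorphism of $\overline{J}$ (or otherwise supplying the connectedness/étale-fibre argument) in the characteristic-$p$ setting, since the Bardsley--Richardson slice does not come with the analytic flexibility of Luna's complex slice. One must check that the strongly étale property is preserved under the translation and that the relevant open pieces remain $\pi_G$-saturated — this is where the hypotheses on $p$ (separably good, $[G,G]$ simply connected, so that $G_s = G_s^\circ$) enter. Everything else is formal manipulation with the associated fibre bundle and the equivalence relation $\sim_{\se}$, and reduces to citing Theorem \ref{th_ACE} and Lemma \ref{lem_jnghd}; the statement is really the group-side shadow of the étale-slice construction, so once the slice is in hand the conclusion follows as in the complex case treated in \cite{ACE}.
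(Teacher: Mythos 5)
Your opening reduction is fine: conjugation by a fixed element is an automorphism of $\overline{J}$, so it suffices to compare $(\overline{J},su)$ with $(\overline{J},zu)$ for $z\in(Z(G_s^\circ)^\circ s)^\reg$. The gap is in how you then compare these two germs, and it is exactly the point you defer as ``the hard part''. None of the three devices you float survives scrutiny. (a) The slice $U$ of Proposition \ref{prop_ACE1} is a $\pi_M$-saturated neighbourhood of $s$: it contains $su$ (same fibre of $\pi_M$), but nothing forces $z$ or $zu$ to lie in $U$; your justification that $s$ and $z$ lie in the same fibre of $\pi_G$ is false, since two semisimple elements in the same fibre would be $G$-conjugate, which generic elements of the coset $Z(G_s^\circ)^\circ s$ are not (they lie in the same Jordan class, not the same closed orbit). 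Also ``$z*u$'' is not a preimage of $zu$: since $z$ centralizes $M=G_s^\circ$, one has $\gamma(z*u)=zuz^{-1}=u$. (b) Translation by $\zeta:=zs^{-1}\in Z(M)^\circ$ neither preserves $\overline{J}$ nor descends along $\gamma$: $\zeta\notin Z(G)$, so $\zeta\overline{J}$ is not even a union of conjugacy classes, and $\gamma$ is \'etale but not injective, so there is no induced automorphism of $\overline{J}$ carrying $su$ to $zu$. (c) Irreducibility of $(Z(G_s^\circ)^\circ s)^\reg$ together with \'etaleness along it does not transport smooth-equivalence classes from one point to another; that is precisely what the lemma asserts.

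For the record, the paper does not reprove this statement but imports it from \cite[Corollary 4.5]{ACE}; the argument behind that citation runs through the local model, not through an automorphism of $\overline{J}$. Apply Theorem \ref{th_ACE} at both points $su$ and $zu$ (with $M=G_s=G_z$) and check that the two right-hand sides of \eqref{eq_clos} agree as pointed varieties. Two observations, absent from your proposal, do this. First, $zu=\zeta\,su$ with $\zeta\in Z(M)^\circ$, and every Jordan class of $M$ is stable under translation by $Z(M)^\circ$ (for any semisimple $t\in M$ one has $M_{\zeta t}=M_t$ and $Z(M)^\circ\subseteq Z(M_t^\circ)^\circ$); hence $su$ and $zu$ lie in the same Jordan class of $M$, so, closures of Jordan classes being unions of Jordan classes, $su\in\overline{J_M}$ if and only if $zu\in\overline{J_M}$, and the index sets $I_{J,su}$ and $I_{J,zu}$ coincide. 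Second, the same stability gives $\zeta^{-1}\overline{J_{M,i}}=\overline{J_{M,i}}$, whence $z^{-1}\overline{J_{M,i}}=s^{-1}\overline{J_{M,i}}$ for every $i$. Thus both germs are smoothly equivalent to the single pointed variety $\left(\bigcup_i s^{-1}\overline{J_{M,i}},u\right)$, and one concludes by transitivity of $\sim_{\se}$ (this derivation, of course, uses the standing hypotheses of section \ref{s_redunip}). So the central translation does enter, but inside $M$, acting on the local models, rather than on $\overline{J}\subset G$ as your step (4) requires.
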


\begin{Remark}[{{\cite[Remark 6.4]{ACE}}}] \label{rmk_geom}
Let $J \in \mathcal{J}(G)$ and let $X = \overline{J}$ or $X = \overline{J}^\reg$.
The locus where $X$ is not smooth (resp. not normal) is closed.
Therefore, to check that $X$ is smooth (resp. normal), it is sufficient to select a point $x_i$ for each  $J_i \in \mathcal{J}(G)$ s.t. $J_i$ is closed in $X$ and to check if $X$ is smooth at $x_i$, by Lemma \ref{lem_geom}.
\end{Remark}

\subsubsection{Sheets}
The set $\mathcal{J}(\g)$ is partially ordered by the relation $\J_1 \preceq \J_2$ if and only if $\J_1 \subset \overline{\J_2}^\reg$.
The sheets of $\g$ with respect to the adjoint $G$-action can be described as the varieties $\overline{\J}^\reg$ for $\J$ maximal in $\mathcal{J}(\g)$.
The arguments in \cite{Borho, BK79} prove that if $k = \C$, then $\overline{\J}^\reg$ is a sheet in $\g$ if and only if $\J = \J(\l, \orb^L)$ with $\orb^L$ rigid in $\l$, i.e., the orbit $\orb^L$ cannot be induced from any proper Levi subalgebra of $\l$.
An analogous result for the sheets of $\g$ holds for $G$ simple simply connected with $p$ very good, as explained in \cite{PremSt}.
The situation for sheets of conjugacy classes of $G$ is  analogous, and treated in detail in \cite{CE1} for the case of $p$ good for $G$. More about sheets in bad characteristic can be found in \cite{Sp_bad, Sp_f4} for the Lie algebra setting and in \cite{ACEbad, Sim} for the group setting.

\section{Reduction to unipotent elements} \label{s_redunip}
In this section we assume that $G$ is a connected reductive group with simply-connected derived subgroup to ensure that centralizers of semisimple elements are connected.
The ideas for the proofs come from \cite[\S 4]{ACE}, so we follow closely the exposition therein and we refer to it quite often.
Some arguments, however, need adjustments under our weaker assumptions on $p$, so we adopt techniques and results from \cite[\S 3]{CES} and \cite[\S 7]{BR}.

\subsection{Restriction to the \'etale locus}
We begin with the characteristic-free version of \cite[Proposition 4.1]{ACE} on the existence of an \'etale neighbourhood of a point in a Jordan class closure.

\begin{Proposition}\label{prop_ACE1}
Let $r \in G$ be semisimple and let $M =G_r$.
There exists a Zariski open neighbourhood $U \subset M$ of $r$ in $M$ with the following features.
\begin{enumerate}[label=(\arabic*)]
\item $U$ is saturated with respect to the categorical quotient $\pi_M \colon M \to M /\!/ M$.
\item For $J_M \in \mathcal{J}(M)$, one has $J_M \cap U \neq \varnothing$ if and only if $r \in \overline{J_M}$.
\item  Consider the map $\gamma \colon G \times^M M \to G$, where $\gamma (g * x) = g x g^{-1}$.
Then $\gamma$ restricts to an \'etale map on $G \times^M U$.
\item $\gamma(G \times^M U) = G \cdot U$ is an open neighbourhood of $r$ in $G$, saturated with respect to the categorical quotient $\pi_G \colon G \to G/\!/G$.
\end{enumerate}
\end{Proposition}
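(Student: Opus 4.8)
The plan is to build the open set $U$ by intersecting several open conditions around $r$ and then verifying the four claims, relying on the Bardsley--Richardson Luna-type slice theorem.

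\textbf{Construction of $U$.} First I would set $M = G_r = G_r^\circ$ (connected since $[G,G]$ is simply connected). Note $r \in Z(M)$, so $r$ is a central semisimple element of the reductive group $M$. I would take $U$ to be the union $U(r) = \bigcup_{J_M \in \mathcal{J}(M),\ r \in \overline{J_M}} J_M$ from Lemma \ref{lem_jnghd}, intersected if necessary with a further $G$-stable (equivalently, since $r$ is central, $M$-stable) open neighbourhood of $r$ on which the associated-fibre-bundle map $\gamma$ is étale. Lemma \ref{lem_jnghd} immediately gives (2), and gives that $U(r)$ is $\pi_M$-saturated, which is (1), provided the extra shrinking is still saturated --- so I would take care to shrink only within the saturation, i.e. replace any open $V \ni r$ by $\pi_M^{-1}(\pi_M(V)) \cap U(r)$, which is still open because $\pi_M$ is an open map on the saturated locus (or argue via \cite[\S 2]{BR} that categorical quotients by reductive groups send saturated opens to opens).

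\textbf{Étaleness of $\gamma$, claim (3).} The map $\gamma \colon G \times^M M \to G$, $g * x \mapsto gxg^{-1}$, is $G$-equivariant, and $\gamma(e * r) = r$. The key point is that $\diff_{e*r}\gamma$ is an isomorphism. This is exactly where a Luna-slice / étale-slice argument enters: since $r$ is semisimple, the orbit $G\cdot r$ is separable (as recalled in the excerpt, $\Lie G_r = \g_r$), so $T_r(G\cdot r) \cong \g/\g_r$, and $\m = \g_r$ is an $M$-stable complement coming from the decomposition $\g = \g_r \oplus [\g, r]$ (valid because the adjoint action of the torus containing $r$, or rather of $r$ itself, is semisimple on $\g$). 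A tangent-space computation then shows $\diff_{e*r}\gamma \colon \g \otimes_{\m}\m \cong (\g/\m)\oplus \m \to \g$ is the identity. Hence $\gamma$ is étale at $e*r$; by $G$-equivariance it is étale along the whole fibre over the $G$-orbit of $r$, and since the étale locus is open and $G$-stable, it contains $G\times^M V$ for some $M$-stable open $V\ni r$. I would fold this $V$ into the definition of $U$ (intersecting with $U(r)$ and re-saturating as above), which secures (3). Here I would cite \cite[\S 7]{BR}, and possibly \cite[\S 3]{CES}, for the characteristic-free slice statement; the subtlety compared to $k=\C$ is exactly that separability of the orbit $G\cdot r$ must be invoked explicitly rather than taken for granted, and this is why one needs $r$ semisimple (semisimple orbits are always separable by \cite{BoSp}, as recalled).

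\textbf{Claim (4).} Since $\gamma$ is étale on $G\times^M U$, it is in particular open, so $\gamma(G\times^M U) = G\cdot U$ is open in $G$ and contains $r$. It remains to see $G\cdot U$ is $\pi_G$-saturated. This follows from (1): if $x \in U$ and $y \in G$ with $\overline{G\cdot x}\cap \overline{G\cdot y}\neq\varnothing$, I want $y \in G\cdot U$. Taking closures of $M$-orbits inside $M$ versus $G$-orbits inside $G$: because $U$ is $\pi_M$-saturated and $M$-stable, and $\gamma$ is étale hence maps $\pi_M$-fibres into $\pi_G$-fibres appropriately, one transfers the $M$-saturation of $U$ to the $G$-saturation of $G\cdot U$. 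Concretely I would use that $\overline{G\cdot x}\cap \overline{G\cdot y}\neq\varnothing$ forces the semisimple parts of $x$ and $y$ to be $G$-conjugate (the closed orbit in $\overline{G\cdot x}$ is that of its semisimple part), reduce $y$ to lie in $M$, and then apply saturation of $U$ in $M$; the étale slice guarantees $\overline{M\cdot y'}\cap \overline{M\cdot x}\neq\varnothing$ in $M$ whenever the corresponding intersection holds in $G$. This is the analogue of \cite[Proposition 4.1(4)]{ACE}, and the argument there carries over once (1)--(3) are in place.

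\textbf{Main obstacle.} The crux is step (3): establishing that $\gamma$ restricts to an étale map, which in the complex case is handled by Luna's theorem but here requires the Bardsley--Richardson machinery and careful attention to separability and to the hypothesis that $p$ be (separably) good enough for the relevant centralizers and isogenies to behave well. Everything else is bookkeeping with saturation and closures of Jordan classes, which is largely formal given Lemma \ref{lem_jnghd} and the finiteness of $\mathcal{J}(M)$.
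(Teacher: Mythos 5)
Your overall outline (verify the hypotheses of the Bardsley--Richardson version of Luna's lemma at $e*r$, use separability of the semisimple orbit for the tangent-space computation, intersect with $U(r)$ from Lemma \ref{lem_jnghd}, and prove (4) via Jordan decomposition and conjugacy of semisimple parts) matches the paper's strategy, but there is a genuine gap at the one step that is the actual point of invoking \cite[Theorem 6.2]{BR}: producing a $\pi_M$-saturated open neighbourhood of $r$ \emph{inside} the \'etale locus. Your fix --- replacing an open $V\ni r$ contained in the \'etale locus by $\pi_M^{-1}(\pi_M(V))\cap U(r)$ --- fails on two counts. First, the openness claim is unjustified: $V$ is precisely not saturated, so the fact that categorical quotients send \emph{saturated} opens to opens does not apply, and $\pi_M(V)$ (hence its preimage) need not be open; ``$\pi_M$ is an open map on the saturated locus'' is circular here. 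Second, and more seriously, saturation enlarges $V$: the set $\pi_M^{-1}(\pi_M(V))$ contains points whose orbit closures meet orbit closures of points of $V$, and there is no reason these new points lie in the \'etale locus of $\gamma$; intersecting with $U(r)$ does not bring you back inside $V$. So claim (3) is not secured for the $U$ you construct, and (1) is not established either.

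This is exactly what Luna's Fundamental Lemma in the characteristic-free form \cite[Theorem 6.2]{BR} provides and what the paper uses: its output is an invariant function $f\in k[G]^G$ with $f(G\cdot r)=1$ such that $\gamma$ is \'etale on the \emph{principal, hence automatically saturated,} open set $\gamma^{-1}(G\setminus\mathcal{Z}(f))$. Restricting $f$ to $M$ gives $\bar f\in k[M]^M$, so $U_M=M\setminus\mathcal{Z}(\bar f)$ is $\pi_M$-saturated for free, one has $\gamma^{-1}(G\setminus\mathcal{Z}(f))=G\times^M U_M$ via $k[G\times^M M]^G\cong k[M]^M$, and then $U:=U_M\cap U(r)$ satisfies (1)--(3); saturation is not something you can recover afterwards by naive re-saturation of the \'etale locus. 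Your verification of \'etaleness at $e*r$ (which also needs smoothness of $M$ and of $G\times^M M$ to pass from the differential criterion to \'etaleness) and your sketch of (4) are essentially the paper's arguments, but they rest on (1) and (3), so the proof as written does not go through without replacing the re-saturation step by the invariant-function mechanism of \cite[Theorem 6.2]{BR}.
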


\begin{proof}
We  use Bardsley-Richardson's version of Luna's Fundamental Lemma  \cite[Theorem 6.2]{BR} to restrict $\gamma$ on an open subset of its \'etale locus.
We verify the assumptions therein for the $G$-equivariant morphism $\gamma \colon G \times^M M \to G$ defined by $g * m \mapsto g m g^{-1}$ at the point $e *r \in G \times^M M$.
The orbit $G \cdot (e*r)$  is closed in $G \times^M M$ since it coincides with the image through the quotient $G \times M \to G \times^M M$ of the closed $G$-stable set $G \times \{r\}$; the restriction of $\gamma$ to $G \cdot (e*r)$ is injective.
Since $r$ is semisimple, the conjugacy class $G \cdot r = \gamma(G \cdot (e*r))$ is closed and separable (i.e., the orbit map $G \to G \cdot r$ defined by $g \mapsto g  r g^{-1} $ is separable) by \cite[Proposition 9.1]{Bo}.
By \cite[Proposition A.8.12]{CGP}, the subgroup $M$ is smooth  (as an affine group scheme), hence $G \times^M M$ is smooth too (\cite[proof of Proposition 7.3]{BR}).
We claim that $\gamma$ is \'etale at $e * r$.
Since both the domain and the target set of $\gamma$ are smooth, it is equivalent to prove that $\diff_{e*r}\gamma \colon T_{e *r} (G \times^M M) \to T_r G$ is an isomorphism.
The computation of the transversality of $M$ and $G \cdot r$ at $r$
relies only on semisimplicity of $r$ and is independent of $p$, so that one can follow verbatim  \cite[proof of Proposition 4.1]{ACE}.
In particular, \cite[Theorem 6.2]{BR} applies, hence there exists $f \in k[G]^G$ such that $f(G \cdot r) = 1$ and the restriction of $\gamma$ to $\gamma^{-1}(U_G)$ is \'etale where $U_G := G \setminus \mathcal{Z}(f)$, which is a principal open $G$-saturated set.
Consider the restriction of functions $k[G] \to k[M], f \mapsto \bar f$ and define the principal open $U_M := M \setminus \mathcal{Z}(\bar{f}) \subset M$.
The restriction maps the subring $k[G]^G$ to $k[M]^M$, so that $U_M$ is $\pi_M$-saturated.
The isomorphism $k[G \times^M M]^G \cong k[M]^M$ given in \cite[5.3.3]{BR} yields 
$\gamma^{-1}(U_G) = G \times^M U_M$.

We restrict $U_M$ to construct the sought open set $U$.
Let $U(r) := \bigcup_{J_M \in \mathcal{J}(M), \; r \in \overline{J_M}} J_M$. By Lemma \ref{lem_jnghd} this is a $\pi_M$-saturated open neighbourhood of $r$ in $M$.
Define $U \coloneqq U_M \cap U(r)$, this is an open neighbourhood of $r$ satisfying (1), (2), (3).
We prove that $G \cdot U$ satisfies (4).
Since $U$ is in the \'etale locus of $\gamma$, then  $M_x^\circ = G_x^\circ$ holds for all $x \in U$ thanks to \cite[Remark 4.2]{ACE}.
Let $x= su \in U$ and $y=tv \in G$ s.t. $\pi_G(x) = \pi_G(y) \Leftrightarrow t = gsg^{-1} \in gUg^{-1}$ for some $g \in G$.
Since $U$ is $\pi_M$-saturated, we have $s \in U$ and $t \in gUg^{-1}$.
Moreover  $v \in G_t^\circ \subset g G_s^\circ g^{-1} \subset g M g^{-1}$.
But $U$ is $\pi_M$-saturated, so $v \in gUg^{-1}$ and we conclude $y = tv \in G \cdot U$.
\end{proof}

\begin{Remark}\label{rk_ace}
As in \cite[Remark 4.2]{ACE}, for all $x \in U$ we have
 $\dim G_x = \dim M_x$, equivalently $\dim G \cdot x = \dim G/M + \dim M \cdot x$. 
\end{Remark}

The next result describes the irreducible components of the intersection $\overline{J} \cap U$ for $J \in \mathcal{J}(G)$ meeting $U$ nontrivially. 

\begin{Proposition} \label{prop_irrcomp}
Let $r \in G$ be semisimple and set $M = G_r$.
Build the open $\pi_M$-saturated neighborhood $U \subset M$ of $r$ as in Proposition \ref{prop_ACE1} and retain notation therein for the map $\gamma$.
Let $J \in \mathcal{J}(G)$ with $J \cap U \neq \varnothing$.
Then the following statements hold.
\begin{enumerate}
\item One has $J \cap U = \bigcup J_{M,i} \cap U$ where $J_{M,i} \in \mathcal{J}(M)$ satisfies $J_{M,i} \cap U \cap J \neq \varnothing$.
\item The variety $J \cap U$ is pure of dimension $\dim J - \dim G/M$ and its irreducible components are the closures in $J \cap U$ of $J_{M,i} \cap U$.
\item The map $\gamma_J \colon G \times^M (\overline{J} \cap U ) \to \overline{J} \cap G \cdot U$
defined by $g * x \to gxg^{-1}$ is obtained from $\gamma$ via pull-back, therefore it is \'etale.
\item The varieties $\overline{J_{M,i}} \cap U$ are the irreducible components of $\overline{J} \cap U$.
\end{enumerate}

\end{Proposition}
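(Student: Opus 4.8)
The plan is to transfer the structure of Jordan classes of $M$ on $U$ to that of Jordan classes of $G$ on $G\cdot U$ through the \'etale map $\gamma$ of Proposition \ref{prop_ACE1}, using crucially that $U$ is $\pi_M$-saturated --- hence $M$-stable and stable under taking semisimple parts --- and that $M_x^\circ=G_x^\circ$ for all $x\in U$, as recorded in the proof of Proposition \ref{prop_ACE1} and in Remark \ref{rk_ace}.

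For (1) I would first check that, for $x,y\in U$, $M$-Jordan equivalence implies $G$-Jordan equivalence. Writing $x=su$, $y=tv$, if some $m\in M$ conjugates the $M$-Jordan data of $x$ to that of $y$, then $s$, $t$ and $msm^{-1}$ all lie in $U$ (saturation and $M$-stability of $U$, using $s\in\overline{M\cdot x}$, $t\in\overline{M\cdot y}$ and that semisimple classes are closed), so $M_s^\circ=G_s^\circ$, $M_t^\circ=G_t^\circ$, $M_{msm^{-1}}^\circ=G_{msm^{-1}}^\circ$ and the connected centres of these groups coincide with those of the ambient centralizers; then the relations $M_{msm^{-1}}^\circ=M_t^\circ$, $mZ(M_s^\circ)^\circ sm^{-1}=Z(M_t^\circ)^\circ t$, $mum^{-1}=v$ read as the corresponding $G$-Jordan equivalence relations witnessed by $m\in M\le G$. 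Hence for each $x\in J\cap U$ one has $J_M(x)\cap U\subseteq J(x)=J$, and since $\mathcal J(M)$ is finite we get $J\cap U=\bigcup_i (J_{M,i}\cap U)$, a finite union over the distinct $M$-Jordan classes $J_{M,i}$ with $\varnothing\neq J_{M,i}\cap U\subseteq J$, which is (1).

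For (2), each $J_{M,i}\cap U$ is nonempty open in the irreducible $J_{M,i}$, hence irreducible and dense in $\overline{J_{M,i}}$. For $x=su\in J_{M,i}\cap U$, combining the dimension formula \eqref{dim_form} applied in both $G$ and $M$, the equality $\dim Z(G_s^\circ)=\dim Z(M_s^\circ)$ coming from $G_s^\circ=M_s^\circ$, and Remark \ref{rk_ace}, yields $\dim J=\dim G\cdot x+\dim Z(G_s^\circ)=\dim G/M+\dim M\cdot x+\dim Z(M_s^\circ)=\dim G/M+\dim J_{M,i}$; so all $J_{M,i}\cap U$ have dimension $\dim J-\dim G/M$, $J\cap U$ is pure of this dimension, and since the $J_{M,i}$ are distinct Jordan classes (disjoint, dense in their closures) their closures in $J\cap U$ are pairwise distinct, equidimensional, irreducible, hence exactly the irreducible components. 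For (3), since $\overline J$ is $G$-stable, $\gamma^{-1}(\overline J)=\{g*x:gxg^{-1}\in\overline J\}=\{g*x:x\in\overline J\}$; intersecting with the \'etale locus $G\times^M U$ of $\gamma$ identifies $G\times^M(\overline J\cap U)$ with $\gamma|_{G\times^M U}^{-1}(\overline J\cap G\cdot U)$ and $\gamma_J$ with the induced morphism, which is \'etale as the base change of the \'etale map $\gamma|_{G\times^M U}$ along the closed immersion $\overline J\cap G\cdot U\hookrightarrow G\cdot U$; surjectivity of $\gamma_J$ onto $\overline J\cap G\cdot U$ follows from $\gamma(G\times^M U)=G\cdot U$ in Proposition \ref{prop_ACE1}(4).

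Finally for (4), $J\cap U$ is nonempty open in the irreducible $\overline J$, hence dense in $\overline J$, so its closure in $U$ is $\overline J\cap U$; therefore $\overline J\cap U=\overline{J\cap U}^{\,U}=\bigcup_i\overline{J_{M,i}\cap U}^{\,U}=\bigcup_i(\overline{J_{M,i}}\cap U)$, each summand being irreducible, closed in $U$, and of dimension $\dim J-\dim G/M$ by (2), pairwise distinct, hence precisely the irreducible components of $\overline J\cap U$. (Alternatively, equidimensionality of $\overline J\cap U$ follows from (3): $\gamma_J$ is flat, so $G\times^M(\overline J\cap U)$ is equidimensional of dimension $\dim J$, and since $M$ is connected its components are $M$-stable, of the form $G\times^M Z_j$ for $Z_j$ the components of $\overline J\cap U$.) I expect the main obstacle to be step (1): making precise how the $M$-Jordan partition of $U$ \emph{refines} the $G$-Jordan partition, which rests on the identifications $M_x^\circ=G_x^\circ$ and on the saturation properties of $U$; once this is in place, (2)--(4) reduce to dimension counts, manipulations with closures, and base change for \'etale morphisms.
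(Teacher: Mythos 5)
Your treatments of (1)--(3) are sound and essentially coincide with the paper's: (1) rests on the equalities $M_s^\circ=G_s^\circ$ for semisimple $s\in U$ (Remark \ref{rk_ace} plus $\pi_M$-saturation and $M$-stability of $U$), (2) on \eqref{dim_form} in $G$ and in $M$ together with Remark \ref{rk_ace}, and (3) on base change of the \'etale map $\gamma$ along $\overline{J}\cap G\cdot U\hookrightarrow G\cdot U$. The problem is (4). You begin with ``$J\cap U$ is nonempty open in the irreducible $\overline J$, hence dense in $\overline J$, so its closure in $U$ is $\overline J\cap U$.'' This is false: $U$ is open in $M$, not in $G$, so $J\cap U$ is only open in $\overline J\cap U$, and by your own part (2) it has dimension $\dim J-\dim G/M<\dim J$ whenever $M\neq G$, so it cannot be dense in $\overline J$. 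The statement you actually need, namely that the closure of $J\cap U$ inside $U$ is all of $\overline J\cap U$, is equivalent to saying that \emph{every} irreducible component of $\overline J\cap U$ meets $J$; a priori $\overline J\cap U$ could have components contained in $(\overline J\setminus J)\cap U$, and nothing in your argument excludes this. Your parenthetical alternative only yields equidimensionality of $\overline J\cap U$ (via flatness of $\gamma_J$), which still does not rule out a component lying entirely in the boundary $\overline J\setminus J$. This is precisely the nontrivial content of (4), and it is the step the paper spends most of its proof on.

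The paper closes the gap as follows: for an irreducible component $X$ of $\overline J\cap U$ (which is $M$-stable since $M$ is connected), the affine dimension theorem gives $\dim X\ge \dim\overline J+\dim U-\dim G=\dim J-\dim G/M$; since $\gamma$ is \'etale, in particular quasi-finite, on $G\times^M U$, the image $G\cdot X$ has dimension $\dim G/M+\dim X\ge\dim J$, whence $\overline{G\cdot X}=\overline J$, so $G\cdot X$ meets the dense open subset $J$ and, by $G$-stability of $J$, one gets $J\cap X\neq\varnothing$. Then $J\cap X$ is dense in $X$ and contained in $\bigcup_i J_{M,i}\cap U$, so irreducibility forces $X\subset\overline{J_{M,i}}\cap U$ for some $i$, and maximality gives $X=\overline{J_{M,i}}\cap U$; together with the observation that each $\overline{J_{M,i}}\cap U$ is irreducible of dimension $\dim J-\dim G/M$ and contained in $\overline J\cap U$, this identifies the components. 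Your flatness remark could replace the Hartshorne lower bound on $\dim X$, but the dominance step ($\overline{G\cdot X}=\overline J$, hence $X\cap J\neq\varnothing$) is indispensable and absent from your proposal.
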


\begin{proof}
For (1) observe that the inclusion $J \cap U \subset \bigcup J_{M,i} \cap U$ is trivial. Conversely, let $su \in J \cap  J_{M,i} \cap U$ for some index $i$.
Then $Z(M_s^\circ)^\circ s \cap U = Z(G_s^\circ)^\circ s \cap U$ 
and $J_{M,i} \cap U =
M \cdot ( ((Z(M_s^\circ)^\circ s)^\reg \cap U) u)
\subset G \cdot ((Z(G_s^\circ)^\circ s)^\reg u)  = J$, where we used Remark \ref{rk_ace}.

We compute $\dim J_{M,i}$ for $i \in I$.
We can assume $ J_{M,i} = J_M(s_i u)$ for suitable $s_i u \in U$ and we use \eqref{dim_form} and Remark \ref{rk_ace}.
Then $\dim J_{M,i} = \dim M \cdot (s_i u) + \dim Z(M_{s_i}^\circ) = \dim G/M + \dim G \cdot (s_i u) + \dim Z(G_{s_i}^\circ)$ and we conclude, since all $s_i u \in J$.
Assertion (2) follows directly.

We turn to (3): we use the argument in the first part of the proof of \cite[Proposition 7.5]{BR}. In this terminology, Proposition \ref{prop_ACE1} states that $U$ is an \'etale slice for the $G$-action on $G$ at $r$.
The subvariety $\overline{J} \subset G$ is $G$-stable, closed and contains $r$.
Then  $\overline{J} \cap U$ is an \'etale slice for the $G$-action on $\overline{J}$ at $r$, and in particular
 the following is a pull-back diagram:
 \begin{center}
\begin{tikzcd}
G \times^M (\overline{J} \cap U) \arrow[d]  \arrow[r, "\gamma_J"] & \overline{J} \cap G \cdot U  \arrow[d]\\
G \times^M U \arrow[r, "\gamma"] & G \cdot U 
\end{tikzcd}
\end{center}
where $\overline{J} \cap G \cdot U \to \overline{J}$ is the open immersion.

Finally we prove (4).
 Consider the map $\gamma_J \colon G \times^M (\overline{J} \cap U ) \to \overline{J} \cap G \cdot U$.
Let $X$ be an irreducible component of $\overline{J} \cap U$, then it is $M$-stable by connectedness of $M$.
Moreover, the restriction of $\gamma$ induces a dominant morphism $G \times^M X \to \overline{J}$ because
by \cite[I, 7.1]{Hartshorne} $\dim X \geq \dim U + \dim \overline{ J} - \dim G = \dim J - \dim G/M$.
By the \'etale property there is an open subset of the image $G \cdot X$ of dimension $\dim (G \times^M X) = \dim G/M + \dim X \geq \dim J$.
The inequality $\dim G \cdot X \leq \dim J$ implies 
$\dim G \cdot X = \dim J = \dim \overline{J}$ and $G \cdot X$ is an irreducible subset of $\overline{J}$, also irreducible, concluding $\overline{G \cdot X} = \overline{J}$.
It follows that $J$ is open in $\overline{G \cdot X}$, hence $J \cap G \cdot X \neq \varnothing$ and $J \cap X \neq \varnothing$.
Since $J$ is open in its closure, we have that $J \cap X = J \cap X \cap U$ is open in $\overline{J} \cap X \cap U = X$, hence $J \cap X$ is irreducible.
Clearly $J \cap X \subset J \cap U = \bigcup J_{M,i} \cap U$, hence there exists $i$ such that
$J \cap X \subset \overline{J_{M,i} \cap U}^{J \cap U} \subset \overline{J_{M,i}}$.
We get $\overline{X} = \overline{J \cap X} \subset \overline{J_{M,i}}$, therefore 
$X \subset \overline{J_{M,i}} \cap U \subset \overline{J} \cap U$.
Since $X$ is an irreducible component of $\overline{J} \cap U$ 
and $\overline{J_{M,i}} \cap U$ is irreducible, we conclude.
\end{proof}

\subsection{Collection of the main results}
In this part we collect the characteristic-free versions of the  results in \cite[\S 4]{ACE}, starting from the counterpart of \cite[Proposition 4.3]{ACE}.

\begin{Theorem}\label{th_ACE}
Let $J \subset G$ be a Jordan class.
Let $rv \in \overline{J}$ and set $M := G_r$.
Then 
\begin{equation} \label{eq_clos}
(\overline{J}, rv) \sim_{\se} \left( \bigcup_{i \in I_{J, rv}} r^{-1} \overline{J_{M,i}}, v \right)
\end{equation}
where $I_{J, rv}$ is an index set for $\{ J_{M,i} \in \mathcal{J}(M) \mid J_{M,i} \cap J \neq \varnothing, rv \in \overline{J_{M,i}} \}$.

Moreover, if $rv \in \overline{J}^\reg$, then $rv \in \overline{J_{M,i}}^\reg$ for all $i \in I_{J, rv}$ and  
\begin{equation}  \label{eq_reg}
(\overline{J}^\reg, rv) \sim_{\se} \left( \bigcup_{i \in I_{J, rv}} r^{-1} \overline{J_{M,i}}^\reg, v \right).
\end{equation}
\end{Theorem}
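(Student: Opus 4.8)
The plan is to exploit the étale map $\gamma$ of Proposition~\ref{prop_ACE1} together with the pull-back diagram of Proposition~\ref{prop_irrcomp}(3) to transport the local structure of $\overline{J}$ near $rv$ to the local structure of a union of Jordan class closures in $M = G_r$. The key point is that smooth equivalence is exactly the notion designed to be preserved by étale (hence smooth) morphisms and by products with affine space. First I would set $Z := G \times^M (\overline{J} \cap U)$, where $U \subset M$ is the saturated neighbourhood of $r$ from Proposition~\ref{prop_ACE1}, and consider the two projections: $\gamma_J \colon Z \to \overline{J} \cap G \cdot U \hookrightarrow \overline{J}$, which is étale by Proposition~\ref{prop_irrcomp}(3), and the bundle projection $\rho \colon Z \to G/M$. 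Over the base point $eM \in G/M$ the fibre of $\rho$ is $\overline{J} \cap U$, and since $G/M$ is smooth, $\rho$ is smooth at the point $e * v \in Z$. Thus $(\overline{J}, rv) \sim_{\se} (\overline{J} \cap U, v)$, because both $\gamma_J$ and $\rho$ are smooth at $e*v$ and send it to $rv$ and $v$ respectively (here I use that $\gamma_J(e*v) = ev e^{-1} = rv$ — more precisely the point of $\overline{J} \cap U$ I must start from is $v$ itself, noting $rv \in \overline{J}$ forces $v \in \overline{J} \cap U$ up to the identification coming from $M = G_r$; I would state this carefully using that $r$ is central in $M$, so multiplication by $r$ is an isomorphism $M \to M$ carrying $\overline{J_{M,i}}$ to $r\overline{J_{M,i}}$, which explains the $r^{-1}$ shift in the statement).

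Next I would identify the variety $\overline{J} \cap U$ with $\bigcup_{i \in I_{J,rv}} r^{-1}\overline{J_{M,i}}$ locally near $v$. By Proposition~\ref{prop_irrcomp}(4), the irreducible components of $\overline{J} \cap U$ are exactly the $\overline{J_{M,i}} \cap U$ where $J_{M,i}$ ranges over the classes in $\mathcal{J}(M)$ with $J_{M,i} \cap J \neq \varnothing$; shrinking to a neighbourhood of $v$, only those components passing through $v$ survive, i.e. those with $v \in \overline{J_{M,i}}$ — equivalently, after the $r$-shift, those with $rv \in \overline{r J_{M,i}}$, which is precisely the index set $I_{J,rv}$. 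Since smooth equivalence is a local notion at the marked point (one may restrict all varieties to affine open neighbourhoods of the marked points without changing the $\sim_{\se}$-class), we obtain $(\overline{J} \cap U, v) \sim_{\se} (\bigcup_{i \in I_{J,rv}} \overline{J_{M,i}} \cap U, v)$, and then translating by $r$ and enlarging back from $U$ to all of $M$ (again a local move at $v$) yields $(\overline{J}, rv) \sim_{\se} (\bigcup_{i \in I_{J,rv}} r^{-1}\overline{J_{M,i}}, v)$, which is \eqref{eq_clos}.

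For the regular-closure statement \eqref{eq_reg}, I would first argue that $rv \in \overline{J}^\reg$ implies $rv \in \overline{J_{M,i}}^\reg$ for each $i \in I_{J,rv}$: by Remark~\ref{rk_ace}, for $x \in U$ one has $\dim G \cdot x = \dim G/M + \dim M \cdot x$, so the level sets for the $G$-action on $G$ and for the $M$-action on $M$ are compatible on $U$ up to the constant shift $\dim G/M$; since $\gamma$ is étale and $U$ is saturated, the regular locus $\overline{J}^\reg$ pulls back, under $\gamma_J$, to $G \times^M \bigl(\bigcup_i \overline{J_{M,i}}^\reg \cap U\bigr)$ — here I use that $\overline{J}^\reg \cap G \cdot U$ is the locus of points of minimal centralizer dimension, which by the dimension comparison corresponds componentwise to the regular loci of the $\overline{J_{M,i}}$, together with Proposition~\ref{prop_irrcomp}(2) showing each component $\overline{J_{M,i}} \cap U$ has the expected dimension $\dim J - \dim G/M$ so that "regular in $\overline{J}$" and "regular in some/every $\overline{J_{M,i}}$" coincide on the overlap. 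Given $rv \in \overline{J_{M,i}}^\reg$, I then repeat the étale-slice argument verbatim with $\overline{J}$ replaced by $\overline{J}^\reg$ and $\overline{J_{M,i}}$ by $\overline{J_{M,i}}^\reg$: the étale map $\gamma$ restricts to an étale map $G \times^M (\overline{J}^\reg \cap U) \to \overline{J}^\reg \cap G \cdot U$ (an étale morphism stays étale after base change along a locally closed subset that is a union of level-set-strata, since étaleness is preserved under pull-back and $\overline{J}^\reg \cap G\cdot U$ is $\gamma$-saturated by the dimension comparison), and the bundle projection to $G/M$ is again smooth at $e*v$, giving \eqref{eq_reg}.

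\textbf{Main obstacle.} The delicate step is the compatibility of regular loci with the étale map, i.e. verifying that $\gamma^{-1}(\overline{J}^\reg \cap G\cdot U) = G \times^M \bigl(\bigcup_i \overline{J_{M,i}}^\reg \cap U\bigr)$ as sets. This rests on two facts that must be checked with the weaker hypotheses on $p$: that centralizer dimensions transform correctly under $\gamma$ (Remark~\ref{rk_ace}, which already incorporates the needed input $M_x^\circ = G_x^\circ$ on $U$ from Proposition~\ref{prop_ACE1}), and that each component $\overline{J_{M,i}}$ meeting $J$ contributes a component of $\overline{J} \cap U$ of the full expected dimension (Proposition~\ref{prop_irrcomp}(2),(4)), so that no spurious lowering of the regular-locus dimension occurs on passing between $G$ and $M$. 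In characteristic $0$ this is straightforward, but in characteristic $p$ one must be careful that the separability built into the étale slice — already secured in the proof of Proposition~\ref{prop_ACE1} via \cite[Proposition 9.1]{Bo} and \cite[Theorem 6.2]{BR} — is genuinely enough to conclude, and that the argument does not secretly use the exponential map or any characteristic-zero-specific structure; I would lean on \cite[\S 7]{BR} and the étale-slice formalism to make this rigorous.
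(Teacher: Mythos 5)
Your overall strategy is the same as the paper's: localize via the \'etale slice of Propositions~\ref{prop_ACE1} and \ref{prop_irrcomp}, identify the components of $\overline{J}\cap U$ through the marked point, translate by the central element $r\in Z(M)$ to produce the $r^{-1}$-shift, and deduce the regular-closure statement from the dimension comparison of Remark~\ref{rk_ace} together with Proposition~\ref{prop_irrcomp}(2). However, the central smooth-equivalence step is carried out incorrectly. You take as the two legs of the span the \'etale map $\gamma_J\colon Z=G\times^M(\overline{J}\cap U)\to \overline{J}\cap G\cdot U$ and the bundle projection $\rho\colon Z\to G/M$, and claim that $\rho$ ``sends $e*v$ to $v$''. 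It does not: $\rho(e*v)=eM\in G/M$, so the span $(\gamma_J,\rho)$ only yields $(\overline{J},rv)\sim_{\se}(G/M,eM)$, which is not the desired statement and is in fact absurd (it would force $\overline{J}$ to be smooth at every point of $G\cdot U$). There is no globally defined ``projection to the fibre'' of the associated bundle, and Zariski-local triviality of $G\to G/M$ is not available in general to repair this directly. The correct bridge, which is what the paper uses, is the auxiliary variety $G\times(\overline{J}\cap U)$: the quotient map $G\times(\overline{J}\cap U)\to G\times^M(\overline{J}\cap U)$ is smooth (it is the quotient by a free action of the smooth group $M$) and the second projection $G\times(\overline{J}\cap U)\to \overline{J}\cap U$ is smooth, giving the chain $(\overline{J},rv)\sim_{\se}(\overline{J}\cap G\cdot U,rv)\sim_{\se}(G\times^M(\overline{J}\cap U),e*rv)\sim_{\se}(G\times(\overline{J}\cap U),(e,rv))\sim_{\se}(\overline{J}\cap U,rv)$.

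Related to this, your bookkeeping of the marked point is off: the relevant point of $Z$ is $e*rv$ (note $rv\in M$ and $rv\in U$ by $\pi_M$-saturation), not $e*v$, and the equality $\gamma_J(e*v)=rv$ you write is false, as is the claim that $rv\in\overline{J}$ forces $v\in\overline{J}\cap U$ (in general $v\notin\overline{J}$ at all; this is precisely why the varieties in the statement are $r^{-1}\overline{J_{M,i}}$ rather than closures of Jordan classes of $G$ through $v$). The repair is the one you gesture at and the paper executes: run the whole argument at the point $rv$, obtaining $(\overline{J},rv)\sim_{\se}\bigl(\bigcup_{i\in I_{J,rv}}\overline{J_{M,i}},rv\bigr)$ after discarding, on an open neighbourhood as in Lemma~\ref{lem_jnghd}, the components whose closures do not contain $rv$, and only at the very end compose with the isomorphism of $M$ given by left translation by $r^{-1}\in Z(M)$, which carries $\overline{J_{M,i}}$ to $r^{-1}\overline{J_{M,i}}$ and $rv$ to $v$. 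Your treatment of \eqref{eq_reg} (regular loci correspond under $\gamma$ via Remark~\ref{rk_ace} and the equidimensionality in Proposition~\ref{prop_irrcomp}(2), and the \'etale slice restricts to the regular loci) is in line with the paper's argument, but it inherits the same defect in the smooth-equivalence mechanism and should be rewritten with the chain above.
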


\begin{proof}
Build the $\pi_M$-saturated open neighbourhood $U$ and the \'etale map $\gamma_J \colon G \times^M (\overline{J} \cap U) \to \overline{J} \cap G \cdot U$ as in Propositions \ref{prop_ACE1} and \ref{prop_irrcomp}.
Let $x \in \overline{J} \cap U$.
We show that $(\overline{J}, x) \sim_{\se} (\overline{J} \cap U, x)$.
By Proposition \ref{prop_irrcomp} (3) the map $\gamma_J$ is \'etale, yielding a chain of smooth equivalences:
$$ (\overline{J}, x) \sim_{\se} (\overline{J} \cap G \cdot U, x) \sim_{\se} ( G \times^M (\overline{J} \cap U), e * x) \sim_{\se} (G \times (\overline{J} \cap U), (e,x)) \sim_{\se} (\overline{J} \cap U, x).$$

We apply (4) from Proposition \ref{prop_irrcomp} and we obtain
$$(\overline{J} \cap U, x) \sim_{\se}
 \left( \bigcup_{i \in I_{J,r}} \overline{J_{M,i}} \cap U, x \right) \sim_{\se} 
\left( \bigcup_{i \in I_{J,r}} \overline{J_{M,i}} , x \right),
$$
where $I_{J,r}$ is an index set for $J_M \in \mathcal{J}(M)$ such that $r \in \overline{J_M}$.
If we define the $M$-stable open subset $U(x) := \bigcup_{J_M \in \mathcal{J}(M) , \; x \in \overline{J_M}}J_M$ as in Lemma \ref{lem_jnghd}, we can shrink $U$ and get  
$(\overline{J} \cap U, x) \sim_{\se}
(\overline{J} \cap U \cap U(x), x)$.
Taking $x = rv$ and composing with the left translation by $r^{-1} \in Z(M)$, we derive \eqref{eq_clos}.

Finally \eqref{eq_reg} is obtained using Proposition \ref{prop_ACE1} (3) and Remark \ref{rk_ace} with the same argument in the final part of the proof of \cite[Proposition 4.3]{ACE}.
\end{proof}

We can now state the main consequences of Theorem \ref{th_ACE}.
\begin{Corollary}[{{\cite[Corollaries 4.6 and 4.7]{ACE}}}]
Let $J \in \mathcal{J}(G), rv \in \overline{J}$ and $M = G_r$. Define $I_{J, rv}$ as in Theorem \ref{th_ACE}.
\begin{enumerate}
\item $\overline{J}$ is unibranch, resp. normal, resp. smooth at $rv$ if and only if $\lvert I_{J, rv} \rvert = 1$ and $r^{-1} \overline{J_{M,i}}$ is unibranch, resp. normal, resp. smooth at $v$.
\item Assume $\lvert I_{J, rv} \rvert = 1$. Then $\overline{J}$ is Cohen-Macaulay  at $rv$ if and only if  and $r^{-1} \overline{J_{M,i}}$ is Cohen-Macaulay  at $v$.
\end{enumerate}
\end{Corollary}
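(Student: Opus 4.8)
The plan is to read everything off Theorem~\ref{th_ACE} together with the fact, recorded in Section~\ref{s_notation}, that the relation $\sim_{\se}$ preserves each of the four properties (unibranch, normal, Cohen--Macaulay, smooth) at the marked point. Set $X := \bigcup_{i \in I_{J, rv}} r^{-1}\overline{J_{M,i}}$, so that \eqref{eq_clos} gives $(\overline{J}, rv) \sim_{\se} (X, v)$. Note that $r^{-1}\in Z(M)$, so each summand $r^{-1}\overline{J_{M,i}}$ is the image of the irreducible variety $\overline{J_{M,i}}$ under a variety automorphism of $M$.

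First I would analyse the local geometry of $X$ at $v$. Each $r^{-1}\overline{J_{M,i}}$ is irreducible and contains $v$, precisely because $rv \in \overline{J_{M,i}}$ for $i \in I_{J,rv}$. Moreover, unwinding the construction in the proof of Theorem~\ref{th_ACE}: on the saturated neighbourhood $U$ (shrunk to $U \cap U(rv)$ as in that proof), Proposition~\ref{prop_irrcomp}(1) and (4) identify the $\overline{J_{M,i}} \cap U$ with the pairwise distinct irreducible components of $\overline{J} \cap U$ passing through $rv$. Distinct irreducible components are never nested, and a global inclusion $\overline{J_{M,i}}\subseteq\overline{J_{M,j}}$ would survive intersection with $U$; hence for $i \in I_{J,rv}$ the $\overline{J_{M,i}}$ — equivalently the $r^{-1}\overline{J_{M,i}}$ — are exactly the $\lvert I_{J,rv}\rvert$ distinct irreducible components of $X$ through $v$.

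Next I would invoke the standard local-algebra fact that a variety which is unibranch, or normal, or smooth at a point is locally irreducible there: for \emph{smooth} because a regular local ring is a domain, for \emph{normal} since a normal local ring is an integrally closed domain, and for \emph{unibranch} by definition. Hence, if $\overline{J}$ is unibranch (resp. normal, resp. smooth) at $rv$, then by $\sim_{\se}$-invariance $X$ has that property at $v$, which by the previous paragraph forces $\lvert I_{J,rv}\rvert = 1$. Conversely, once $\lvert I_{J,rv}\rvert = 1$ the union $X$ collapses to the single irreducible variety $r^{-1}\overline{J_{M,i}}$, so \eqref{eq_clos} reads $(\overline{J}, rv) \sim_{\se} (r^{-1}\overline{J_{M,i}}, v)$, and the stability of the four properties under smooth equivalence yields the remaining implication in (1) and the whole of (2) at once.

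The only delicate point — more bookkeeping than genuine obstacle — is the claim in the second paragraph that the index set $I_{J,rv}$ genuinely enumerates the local irreducible components of $X$ at $v$, rather than a redundant list of irreducible subvarieties; this is exactly what the shrinking of $U$ in the proof of Theorem~\ref{th_ACE} together with Proposition~\ref{prop_irrcomp}(4) are designed to guarantee. It is also the structural reason why part (2) must carry the separate hypothesis $\lvert I_{J,rv}\rvert = 1$: being Cohen--Macaulay, unlike the other three properties, does not imply local irreducibility — a union of two distinct hyperplanes is Cohen--Macaulay, being a hypersurface, yet is not even unibranch along their intersection — so Cohen--Macaulayness of $\overline{J}$ at $rv$ cannot on its own detect whether $\lvert I_{J,rv}\rvert = 1$.
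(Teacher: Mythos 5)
Your argument is correct and is essentially the intended one: the paper itself derives this statement (citing \cite[Corollaries 4.6 and 4.7]{ACE}) exactly from Theorem \ref{th_ACE} together with the invariance of unibranch/normal/Cohen--Macaulay/smooth under $\sim_{\se}$, with $\lvert I_{J,rv}\rvert = 1$ forced by local irreducibility just as you explain. Your observation that the $r^{-1}\overline{J_{M,i}}$ are the pairwise distinct, non-nested components of $X$ through $v$ (via Proposition \ref{prop_irrcomp}) and that Cohen--Macaulayness, unlike the other properties, cannot detect $\lvert I_{J,rv}\rvert = 1$ matches the structure of the cited proof.
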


The combinatorial parametrization in \cite[\S 4]{ACE} of the set $I_{J,rv}$ from Theorem \ref{th_ACE} in terms of the Weyl group of $G$ carries over straightforwardly to the characteristic-free case.
In this manuscript we will not apply this parametrization, so we address the interested reader to the mentioned paper.

\section{Bardsley-Richardson's logarithm-like map} \label{s_br}
In this section $G$ denotes a connected reductive group (we drop the assumption on simple-connectedness).

\begin{Definition} \label{def_loglike}
 A $G$-equivariant morphism of varieties $\lambda \colon G \to \g$ 
 is called a \emph{log-like map (\`a la Bardsley-Richardson) for $G$} if 
there exist an open neighbourhood $U_\lambda$ of $\mathcal{U}(G)$ and an open neighbourhood $V_\lambda$ of $\mathcal{N}(\g)$, both saturated with respect to their categorical quotients by $G$, 
such that
the diagram
 \begin{equation} \label{eq_loglikecd}
\begin{tikzcd}
U_\lambda \arrow[d, "\pi_G"']  \arrow[r, "\lambda"] & V_\lambda \arrow[d, "\pi_G"]\\
U_\lambda/\!/ G   \arrow[r] & V_\lambda /\!/ G 
\end{tikzcd}
\end{equation}
is Cartesian with \'etale surjective horizontal arrows\footnote{The vertical arrows denote the respective categorical quotients of $U_\lambda$ and $V_\lambda$ by $G$, the bottom horizontal arrow is determined by the universal property of the categorical quotient.}.
\end{Definition}

In particular, a log-like map induces a Springer isomorphism $\lambda_{\mid \U(G)} \colon \U(G) \to \N(\g)$.
The existence of a log-like map is not always ensured under the general assumption on $G$ made in this section.
When $p$ is bad, the set of unipotent classes of $G$ is often not in  bijective correspondence with the set of nilpotent adjoint orbits of $\g$, see \cite{LiebeckSeitz}.  
Existence problems may occur even under the assumption that $p$ is good: the unipotent variety of $\PSL_2(k)$ and the nilpotent variety of $\mathfrak{psl}_2(k)$ are not isomorphic if $p = 2$ (see  \cite[Appendix]{SobTay}).
For a counterexample with $p$ separably good, see section \ref{ss_sep}.

Nevertheless, log-like maps can be constructed under some (quite general) sufficient conditions.

\begin{Example}\label{ex_loglike}
We recall the main examples from \cite[\S 9.3]{BR}.
\begin{enumerate}
\item If $G = \GL_n(k)$, then $\lambda \colon G \to \g, g \mapsto  g - I_n$ is a log-like map.
In this case we can take $V_\lambda  = \{x \in \g \mid \det x \neq 1\}$ and $U_\lambda = G$; the map $\lambda$ induces and isomorphism of varieties onto its image.
If $g = su \in G$, then $\lambda(s)$ is the semisimple part of $\lambda(g)$ and $\lambda(g) - \lambda(s)$ is the nilpotent part of $\lambda(g)$.
\item Suppose the reductive group $G$ admits a  representation
$\rho \colon G \to \GL(V)$ satisfying the following properties:
\begin{itemize}
\item[(BR1)] the differential at the identity $\diff_e \rho \colon \g \to \gl(V)$ is injective, and
\item[(BR2)] there is a $G$-stable subspace $\a \subset \gl(V)$ such that $\gl(V) = \a \oplus \diff_e \rho(\g)$ and  $\id_V \in \a$.
\end{itemize}
Then one can obtain a log-like map $\lambda \colon G \to \g$ via the following composition:
$$ G \xrightarrow{\rho} 
\GL(V) \hookrightarrow \gl(V) 
\xrightarrow{\pr} \diff_e \rho(\g) \cong \g$$
where $\pr$ is the projection on $\diff_e \rho(\g)$ with kernel $\a$.
A class of groups  satisfying (BR1) and (BR2) is given by simple simply connected groups with $p$ very good.
\end{enumerate}
\end{Example}

In view of the reduction argument in section \ref{s_redunip}, for the scope of this manuscript it suffices to build a 
log-like map for all centralizers of semisimple elements in $H$ reductive with $[H,H]$ simply connected.

\begin{Remark} \label{rmk_mcninch}
McNinch exhibited a log-like map for any Levi subgroup of a group $H := H_1 \times S$ with $S$ a torus  and $H_1$  semisimple simply connected s.t. $p$ is very good for $H_1$, \cite[Theorem 54]{McN_NOgood}.
With a similar procedure one can obtain a log-like map  for any \emph{strongly standard} reductive group, i.e. a group $K$ separably isogenous to $C_H(D)$ with $D \leq H$ a subgroup of multiplicative type, see \cite[\S 2.4]{McNTes}.
Thanks to \cite[Lemma 17]{McNS} which holds independently of $p$,  all centralizers of semisimple elements of $H$ can be obtained as $C_H(D)$ with $D \leq H$ of multiplicative type.
\end{Remark}

After a private communication with George McNinch, the author could also observe the following.

\begin{Remark} \label{rmk_ambrosiomcninch}
Let $\rho \colon G \to \GL(V)$ and $\lambda \colon G \to \g$ be as in Example \ref{ex_loglike} (2).
Let $s \in G$ be semisimple and put $M:=G_s^\circ$, so that $\m = \g_s$.
We claim that $M$ satisfies again the properties (BR1-BR2). 
Consider $\bar \rho \coloneqq \rho_{\vert_M} \colon M \to \GL(V)$.
It is immediate to check that $\diff_e \bar \rho \colon \m \to \gl(V)$ is injective.
Moreover, we have a decomposition of $M$-stable vector spaces $\g = \m \oplus \image \phi$ where $\phi \coloneqq \Ad(s) - \id_\g \colon \g \to \g$.
Recall that $\gl(V) = \diff_e \rho(\g) \oplus \a$ is a $G$-stable decomposition with $\id_V \in \a$.
Then $\bar \a := \diff_e \rho (\image \phi) \oplus \a$ is the sought $M$-stable complement of $\diff_e \bar \rho(\m)$ in $\gl(V)$.
This proves the claim.

Since $\lambda$ is $G$-equivariant and $M$ is a stabilizer, one has
$\lambda(M) \subset \m$.
 Namely $\pr \circ \rho(M)\subset \m$, whence
the log-like map for $M$ obtained applying the construction by Bardsley-Richardson to the group $M$ agrees with  the restriction of $\lambda$ to $M$.
\end{Remark}

\subsection{Compatibility between log-like maps and Jordan stratification} \label{ss_compatible}
In this section $G$ is a connected reductive group.
Any $G$-equivariant morphism $\lambda \colon G \to \g$ satisfies the following  relations, for all $h \in G$:
 \begin{align}
 G_h \subset G_{\lambda(h)}, \label{incl_1}\\
 \lambda(G_h) \subset \g_h. \label{incl_2}
 \end{align}

\begin{Lemma} \label{lem_ss}
A $G$-equivariant morphism $\lambda \colon G \to \g$ maps semisimple elements of $G$ to semisimple elements of $\g$.
Moreover, if $su$ is the Jordan decomposition of $g \in G$, then $\lambda(s) + (\lambda(g) - \lambda(s))$ is the Jordan decomposition of  $\lambda (g) \in \g$.
\end{Lemma}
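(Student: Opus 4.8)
The plan is to reduce the statement to the case $G = \GL_n(k)$, where it is essentially Example \ref{ex_loglike} (1), by means of a faithful representation together with the $G$-equivariance of $\lambda$. First I would fix a faithful representation $\rho \colon G \hookrightarrow \GL_n(k)$, so that $\g \subset \gl_n(k) = \Mat_n(k)$ as a restricted Lie subalgebra, and recall from the excerpt that for $G \leq \GL_n(k)$ the Jordan decomposition of an element of $\g$ (resp. of $G$) coincides with its Jordan decomposition as an endomorphism of $k^n$. Thus it suffices to check the two assertions after composing with $\rho$; equivalently I may assume $G \leq \GL_n(k)$.

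The first assertion --- $\lambda$ sends semisimple elements to semisimple elements --- I would prove as follows. Let $s \in G$ be semisimple; then $T' := \overline{\langle s \rangle}$ is a diagonalisable (hence linearly reductive) subgroup, and by a standard rigidity/linearisation argument $s$ lies in a maximal torus $T \leq G$ (or at least in a torus $T'$). Now $T'$ acts trivially on $s$ by conjugation, so $T' \subset G_s$, and by \eqref{incl_2}, $\lambda(s) \in \g_s$; but more to the point, consider the map $\mu := \lambda_{\mid T'} \colon T' \to \g$. Since $\lambda$ is $G$-equivariant and $T'$ is abelian, $\mu$ is $T'$-equivariant for the (trivial, since $T'$ acts on itself trivially by conjugation) action on the source, hence $\mu(T')$ consists of $\Ad(T')$-fixed vectors. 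More usefully: $\lambda$ restricted to the torus $T$ is a morphism $T \to \g$ whose image, by $G$-equivariance under $N_G(T)$ and the fact that $\lambda$ commutes with the $T$-action, lands in $\t = \Lie T$ after projecting; since $\t$ consists of semisimple elements of $\g$, we get $\lambda(s) \in \t$ semisimple. (The cleanest version: $C_G(s)^\circ$ contains a maximal torus $T \ni s$, and $\lambda(s) \in \Lie C_G(s)$ by \eqref{incl_2}; applying this again with $C_G(s)^\circ$ in place of $G$ and iterating, or directly observing $\lambda(T) \subseteq \Lie(C_G(T)) = \t$ by \eqref{incl_2}, forces $\lambda(s) \in \t \subseteq \N(\g)^c$, the semisimple elements.)

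For the second assertion, write $g = su$ with $s,u$ commuting, $s$ semisimple, $u$ unipotent. Set $M := C_G(s)^\circ$; then $u \in M$ and $g \in M$. By $G$-equivariance $\lambda(M) \subseteq \m = \g_s$ (this is exactly the observation in Remark \ref{rmk_ambrosiomcninch}, which only uses equivariance), and since $s \in Z(M)$ we also have $\lambda(s) \in \z(\m) \subseteq \m$; by the first part $\lambda(s)$ is semisimple. Now I would argue that $\lambda(g) - \lambda(s)$ is nilpotent and commutes with $\lambda(s)$: commutativity follows since both lie in $\g_s$ and, more precisely, $\lambda(s) \in \z(\m)$ is central in $\m \ni \lambda(g)$, so $[\lambda(s), \lambda(g)] = 0$ and hence $[\lambda(s), \lambda(g) - \lambda(s)] = 0$. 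For nilpotence: working inside $\gl_n(k)$, $\lambda(s)$ is a semisimple matrix, and it commutes with $\lambda(g)$, so the semisimple part of $\lambda(g)$ (as a matrix) is the unique semisimple matrix $\sigma$ with $[\sigma, \lambda(g)]=0$ such that $\lambda(g) - \sigma$ is nilpotent and $\sigma$ is a polynomial in $\lambda(g)$. It therefore suffices to identify $\lambda(s)$ with $\lambda(g)_s$. Here I would use that $u$ is unipotent in $M$, so $u$ lies in the closure of a one-parameter unipotent subgroup, say $u = u_0(1)$ for a morphism $u_0 \colon \mathbb{G}_a \to M$; restricting $\lambda$ along $t \mapsto s\,u_0(t)$ gives a morphism $\mathbb{A}^1 \to \m$ sending $0 \mapsto \lambda(s)$, and since $\lambda(s) \in \z(\m)$ the matrices $\lambda(s\,u_0(t))$ all have the same semisimple part as $\lambda(s)$ (by $\GL_n$-semicontinuity / the $\GL_n$ case applied to $s\,u_0(t) \in \GL_n$, whose semisimple part is $s$ with $\lambda(s)$ independent of $t$). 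Evaluating at $t=1$ gives $\lambda(g)_s = \lambda(s)$, and then $\lambda(g)_n = \lambda(g) - \lambda(s)$ is nilpotent, as required.

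The main obstacle is the rigorous justification that $\lambda(s)$ really is the semisimple part of $\lambda(g)$ rather than merely \emph{a} commuting semisimple summand --- i.e. ruling out that $\lambda(g) - \lambda(s)$ has a nonzero semisimple component. The cleanest route, and the one I would ultimately adopt, is to bypass the deformation argument entirely: embed $G \leq \GL_n$, note $\lambda(s)$ is semisimple (first part) and commutes with $\lambda(g)$ (centrality in $\m$), so it suffices to show $\lambda(g) - \lambda(s)$ is nilpotent; since $u$ is unipotent, $\lambda(u') \in \N(\g)$ for all unipotent $u'$ follows from the fact that $\lambda$ restricted to the unipotent variety is (a priori merely set-theoretically) a $G$-equivariant map $\U(G) \to \g$ whose image consists of ad-nilpotent elements --- indeed $\lambda(u')$ lies in $\Lie C_G(u')$ which contains a maximal unipotent subgroup normalising it, forcing $\lambda(u')$ into the nilpotent cone. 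Then in $M$, decompose $\m = \z(\m) \oplus [\m,\m] \oplus \cdots$ appropriately; $\lambda(g) = \lambda(s) + (\text{nilpotent})$ will follow once one checks $\lambda$ is compatible with the product decomposition $g = s \cdot u$ at the level of the $G_s$-equivariant structure, which again reduces to the $\GL_n$ case where it is the transparent identity $g - I = (s-I) + (g - s)$ with $s - I$ semisimple and $g - s = s(u - I)$ nilpotent and commuting with $s - I$.
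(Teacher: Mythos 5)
Your first assertion is proved essentially as in the paper (pass through a maximal torus $T \ni s$ and use \eqref{incl_2} to get $\lambda(T) \subset \t$), and your observation that $\lambda(s)$ is centralized by $\m=\g_s$, hence commutes with $\lambda(g)$, is correct. The gap is in the only step that carries real content: showing that $\lambda(g)-\lambda(s)$ is \emph{nilpotent}, i.e.\ that $\lambda(s)$ is the semisimple part and not merely a commuting semisimple summand. Your deformation argument assumes $u=u_0(1)$ for some homomorphism $u_0\colon \mathbb{G}_a \to M$; in characteristic $p>0$ every element in the image of such a homomorphism has order dividing $p$, so this fails for unipotent elements of order $>p$ (e.g.\ a regular unipotent in $\GL_3(k)$, $p=2$), and the lemma is stated with no restriction on $p$. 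The ``cleanest route'' you fall back on is also flawed: the claim that $\lambda(u')\in\N(\g)$ for every unipotent $u'$ is false for a general $G$-equivariant morphism --- take $G=\GL_n(k)$ and $\lambda$ the inclusion $\GL_n(k)\hookrightarrow\gl_n(k)$, which is equivariant but sends unipotent elements to unipotent (not nilpotent) matrices; the justification via ``$\Lie C_G(u')$ is forced into the nilpotent cone'' is invalid since $\g_{u'}$ always contains semisimple elements (e.g.\ $\z(\g)$, or the Lie algebra of any torus in $C_G(u')$). Moreover, even granting that claim, nilpotence of $\lambda(u)$ does not yield nilpotence of $\lambda(su)-\lambda(s)$, because $\lambda$ is not additive; your final reduction ``to the transparent identity $g-I=(s-I)+(g-s)$'' conflates the arbitrary equivariant $\lambda$ with the specific log-like map of $\GL_n(k)$. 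Embedding $G\leq\GL_n(k)$ only lets you test semisimplicity and nilpotence of elements of $\g$ as matrices; it does not change what $\lambda$ is, so the ``compatibility of $\lambda$ with the product decomposition $g=su$'' that you invoke is exactly the unproved statement.

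The paper closes this gap with a short orbit-closure argument which you may want to compare: with $M:=G_s^\circ$ acting on $\g_s$, one has $s\in\overline{M\cdot(su)}$, hence by equivariance and continuity $\lambda(s)\in\overline{M\cdot\lambda(su)}$; by \eqref{incl_1} the group $M$ fixes $\lambda(s)$, so $\{\lambda(s)\}$ is a closed $M$-orbit inside this closure, and since the closure of an adjoint orbit of a reductive group contains a unique closed orbit, namely the orbit of the semisimple part, it follows that $\lambda(s)$ \emph{is} the semisimple part of $\lambda(su)$. This replaces both your deformation and your nilpotence-of-$\lambda(u)$ claims, and works uniformly in all characteristics.
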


\begin{proof}
Let $s \in G$ be semisimple.
Then it is contained in a maximal torus $Z \leq G$ and there exists $t \in T$ such that $G_t^\circ = T$.
Hence by $G$-equivariance of $\lambda$  we have
$\lambda(s) \in \lambda(T) = \lambda(G_t^\circ) \subset  \g_t = \Lie  G_t^\circ = \Lie T$, i.e. $\lambda(s)$ lies in the Lie algebra of a torus of $G$, i.e., the element $\lambda(s)$ is semisimple.\footnote{
The argument actually proves that any maximal torus $T \leq G$ satisfies $\lambda(T) \subset \t$.}  

We consider the reductive subgroup $G_s^\circ$ acting via the adjoint action on $\g_s$. 
We have $s \in \overline{G_s^\circ \cdot (su)}$, whence $\lambda(s) \in \overline{G_s^\circ \cdot \lambda(su)}$.
Now $\lambda(s)$ is semisimple by the first part.
The variety $\overline{G_s^\circ \cdot \lambda(su)} $  contains exactly one closed orbit, which is in turn $G_s^\circ \cdot \lambda(s) = \{\lambda(s)\}$.
We conclude that $\lambda(s)$ is the semisimple part of $\lambda(su)$.
\end{proof}

From now until the end of section \ref{ss_compatible} we assume that $G$ admits a log-like map and we fix a triple $(\lambda, U_\lambda, V_\lambda)$
where $\lambda \colon G \to \g$ is a log-like map and $U_\lambda, V_\lambda$ satisfy \eqref{eq_loglikecd}.

\begin{Remark}
Let $\J \subset \g$ be a decompostion class. 
By the description of the closure of a Jordan class, we have $\overline{\J} \cap \N(\g) \neq \varnothing$, so that $\overline{\J} \cap V_\lambda \neq \varnothing$, whence $\J \cap V_\lambda \neq \varnothing$, by openness of $\J$ in $\overline{J}$. 
This implies that $V_\lambda$ meets all Jordan classes of $\g$.

Let $J \subset G$ be a Jordan class of $G$ s.t. $\overline{J} \cap \U(G) \neq \varnothing$.
Then a similar argument implies $J \cap U_\lambda \neq \varnothing$.
We recall that $\overline{J} \cap \U(G) \neq \varnothing$ if and only if $J = J(su)$ with $G_s^\circ$ is a Levi subgroup of $G$ and $s \in Z(G_s^\circ)^\circ$.
In general, the open set $U_\lambda$ need not meet all Jordan classes of $G$ nontrivially, see section  \ref{sub_outside}.
\end{Remark}

\begin{Remark}\label{rk_etale}
For any element $h \in  U_\lambda$, thanks to  \eqref{incl_1} and since  \'etale maps preserve dimensions, we have $G_h^\circ = G_{\lambda(h)}^\circ$.
If moreover the subgroup $G_{\lambda(h)}$ is itself connected, then $G_h^\circ =G_h =   G_{\lambda(h)}$ (in particular, if $h$ is semisimple and $p$ is not torsion for $G$.)
Under separability assumptions (for example if $h$ is semisimple), we also have $\g_h = \g_{\lambda(h)}$.
\end{Remark}

The next result proves that log-like maps  preserve Jordan classes on the fixed open \'etale locus.
\begin{Proposition} \label{prop_respect}
Let $\lambda \colon G \to \g$ be a log-like map and let $su \in U_\lambda$.
The following assertions hold.
\begin{enumerate}[label=(\arabic*)]
\item $\lambda( J(su) \cap U_\lambda) \subset \J(\lambda(su)) \cap V_\lambda$.
\item If the identity element $e \in \overline{J(su)}$, then $\J(\lambda(su)) = \J(\lambda(s) + \lambda(u))$.
\end{enumerate}
\end{Proposition}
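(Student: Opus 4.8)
The plan is to analyze the two data determining a decomposition class --- the centralizer of the semisimple part and the nilpotent orbit --- separately, using the fact (Remark \ref{rk_etale}) that on $U_\lambda$ the log-like map preserves connected centralizers and, for semisimple elements, full centralizers of semisimple type. First I would prove (1). Write $su \in U_\lambda$ and let $x = su'v \in J(su) \cap U_\lambda$ be an arbitrary element, so there is $g \in G$ with $G_{gsg^{-1}}^\circ = G_{u'}^\circ$ (abusively renaming) --- more precisely $x$ has semisimple part $G$-conjugate up to $Z(G_s^\circ)^\circ$-translation to $s$ and unipotent part $G$-conjugate to $u$. By Lemma \ref{lem_ss}, $\lambda(x)$ has semisimple part $\lambda(\sigma)$ and nilpotent part $\lambda(x) - \lambda(\sigma)$, where $\sigma$ is the semisimple part of $x$. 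Since $\sigma$ and a translate of $s$ lie in $U_\lambda$ and are $G$-conjugate to elements of the same torus coset, Remark \ref{rk_etale} gives $\g_{\lambda(\sigma)} = \g_\sigma$ and $G_{\lambda(\sigma)}^\circ = G_\sigma^\circ$; tracking the $Z(G_s^\circ)^\circ$-coset through $\lambda$ (which is $G$-equivariant, so it sends the coset $Z(G_s^\circ)^\circ s$ into $\z(\g_s)$ by Lemma \ref{lem_ss} applied pointwise, or directly by \eqref{incl_2}) shows $\lambda(\sigma)$ lies in the analogous translate for $\lambda(s)$. For the nilpotent parts, $G$-equivariance of $\lambda$ together with $\lambda(G_\sigma) \subset \g_\sigma$ shows that the nilpotent part of $\lambda(x)$ lies in the $G_{\lambda(\sigma)}^\circ = G_\sigma^\circ$-orbit determined by $\lambda(u)$; hence $\lambda(x) \in \J(\lambda(su))$, and $\lambda(x) \in V_\lambda$ because the Cartesian diagram \eqref{eq_loglikecd} forces $\lambda(U_\lambda) \subseteq V_\lambda$. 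This gives (1).

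For (2), the hypothesis $e \in \overline{J(su)}$ means, by the closure description of Jordan classes, that $J(su) = J(L, Z(L)^\circ, \O^L_u)$ with $L = G_s^\circ$ a Levi subgroup and $s \in Z(L)^\circ$. In particular $s, u \in L$ and $su \in L$ with $L = G_s^\circ = G_u^\circ \cap (\cdots)$; the point is that $su$, $s$ and $u$ all lie in the saturated open set $U_\lambda$ (since $\overline{J(su)} \cap \U(G) \neq \varnothing$, by the Remark preceding Remark \ref{rk_etale}, $U_\lambda$ meets $J(su)$ and is saturated, and one can arrange $su$ itself to be in $U_\lambda$ after translating; one then checks $s \in U_\lambda$ using $\pi_G$-saturation and semisimplicity, and $u \in U_\lambda$ since $u$ is unipotent hence in $\U(G) \subseteq U_\lambda$). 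Now apply Lemma \ref{lem_ss}: $\lambda(su)$ has semisimple part $\lambda(s)$ and nilpotent part $\lambda(su) - \lambda(s)$. It remains to identify $\lambda(su) - \lambda(s)$ with $\lambda(u)$ as elements of the same $G_{\lambda(s)}^\circ$-orbit, or even on the nose. Here I would use Remark \ref{rmk_ambrosiomcninch}-style reasoning: the restriction $\lambda_{\mid L} \colon L \to \l$ is itself (the restriction of) a log-like map for $L$, $\lambda(L) \subseteq \l$, and within $\l$ the element $s$ is central, so $\lambda(s) \in \z(\l)$ and the adjoint action of $L$ on $\l$ fixes it; thus $\lambda(su) - \lambda(s)$ and $\lambda(u)$ are both nilpotent elements of $\l = \g_s = \g_{\lambda(s)}$, and I want them in the same $L = G_s^\circ = G_{\lambda(s)}^\circ$-orbit. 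Since $L$ acts transitively on $\O^L_u$, it suffices to check this at one point, and the cleanest route is: the Springer-type isomorphism $\lambda_{\mid \U(L)} \colon \U(L) \to \N(\l)$ is $L$-equivariant, so $\lambda(u)$ is the nilpotent image of $u$; meanwhile $\lambda(su) = \lambda(s \cdot u)$ and one computes that modulo $\lambda(s) \in \z(\l)$ its nilpotent part agrees with $\lambda(u)$ --- this is where one genuinely uses that $\lambda$ restricted to $L$ behaves like a logarithm, i.e. $\lambda(su) - \lambda(s) \in \O^L_{\lambda(u)}$, which follows by $L$-equivariance and the fact that $u$ ranges over $\O^L_u$ while the semisimple and nilpotent Jordan components are intrinsic.

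The main obstacle I anticipate is the final identification in (2): showing $\lambda(su) - \lambda(s)$ lies in the \emph{same $G_{\lambda(s)}^\circ$-orbit} as $\lambda(u)$, not merely that both are nilpotent in $\g_{\lambda(s)}$. The subtlety is that a priori $\lambda$ need not satisfy any multiplicativity like $\lambda(su) = \lambda(s) + \lambda(u)$ when $s$ is not central in all of $G$ (that is precisely the content of (2)'s hypothesis, which restricts to the case $s \in Z(L)^\circ$). I expect the resolution to go through the observation that, since $s \in Z(L)^\circ$ acts trivially under $\Ad$ on $\l$, left translation by $s$ on $L$ intertwines $\lambda_{\mid L}$ with an $L$-equivariant self-map of $\l$ that is the identity modulo $\z(\l)$ on nilpotent elements --- combined with the bijection between unipotent classes of $L$ and nilpotent $L$-orbits in $\l$ induced by the Springer isomorphism $\lambda_{\mid \U(L)}$. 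Equivalently, one applies part (1) with $G$ replaced by $L$ and $s$ genuinely central, where the statement degenerates and the claim becomes transparent. All other steps --- semisimplicity and Jordan-decomposition bookkeeping, saturation arguments placing $s$ and $u$ in $U_\lambda$, and the centralizer identifications --- are routine given Lemma \ref{lem_ss}, Remark \ref{rk_etale}, and the closure formulas already recorded in the excerpt.
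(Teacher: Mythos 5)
Your treatment of the semisimple parts (Lemma \ref{lem_ss}, saturation of $U_\lambda$ to place $\sigma$ in it, Remark \ref{rk_etale} to identify centralizers) matches the paper and is fine. The genuine gap is the nilpotent part, in both items. In (1) you assert that ``$G$-equivariance of $\lambda$ together with $\lambda(G_\sigma)\subset\g_\sigma$ shows that the nilpotent part of $\lambda(x)$ lies in the $G_\sigma^\circ$-orbit determined by $\lambda(u)$.'' This is not an argument: a log-like map has no additivity or multiplicativity, so for $x=zu$ with $z\in (Z^\circ s)^\reg\cap U_\lambda$ the nilpotent part of $\lambda(zu)$ is $\lambda(zu)-\lambda(z)$, which varies with $z$, and nothing formal makes it conjugate to $\lambda(u)$, nor even to $\lambda(su)-\lambda(s)$. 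This is exactly the subtlety you flag yourself when discussing (2), but it already arises in (1) and is where all the work lies. (Note also that the target of (1) is the orbit of the nilpotent part of $\lambda(su)$; that this orbit is the one of $\lambda(u)$ is precisely statement (2) and requires $e\in\overline{J(su)}$, so your intermediate claim in (1) is stronger than what is true in general.) The paper closes this gap by introducing the $M$-equivariant morphism $\nu\colon M\times Z^\circ s\to\N(\m)$, $(g,z)\mapsto \lambda(zgug^{-1})-\lambda(z)$, observing that $\overline{\image\nu}$ is an irreducible $M$-stable subset of $\N(\m)$, hence the closure $\overline{\orb}$ of a single nilpotent $M$-orbit, and then using the \'etale (dimension-preserving) property of $\lambda$ on $U_\lambda$ together with density of $(Z^\circ s)^\reg\cap U_\lambda$ in $Z^\circ s$ to force all values $\nu(e,z)$, for $z$ in that set, into the open orbit $\orb$. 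Some such global irreducibility-plus-dimension argument is indispensable; your proposal has no substitute for it.

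For (2), neither of your suggested routes is available as stated. A log-like map is not a logarithm, so there is no computation giving $\lambda(su)-\lambda(s)\in\O^L_{\lambda(u)}$ from ``$\lambda$ restricted to $L$ behaves like a logarithm.'' The alternative, applying (1) with $G$ replaced by $L$ and $s$ central, presupposes (a) that $\lambda_{\mid L}$ is a log-like map for $L$ --- known only for maps of Bardsley--Richardson type via Remark \ref{rmk_ambrosiomcninch}, whereas the proposition concerns an arbitrary log-like map for $G$ --- and (b) that $su$ lies in the \'etale locus of that restricted map, which you do not establish; and it of course still rests on (1). The paper instead deduces (2) directly from its proof of (1): since $e\in\overline{J(su)}$ forces $Z^\circ s=Z^\circ$, one has $\lambda(u)=\nu(e,e)\in\overline{\image\nu}$, and the equality $\dim M\cdot\lambda(u)=\dim M\cdot u=d'$ (via the Springer isomorphism on unipotents) pins $M\cdot\lambda(u)$ down to $\orb$. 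So the statement is plausible as you outline it, but the key orbit identification is missing in (1) and unsupported in (2).
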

\begin{proof}
We prove (1). For $su \in G$ put $G_s^\circ =: M, Z(M) =: Z$ so that $\m = \g_s$.
It suffices to show that $\lambda(((Z^\circ s)^\reg \cap U_\lambda) u) \subset \J(\lambda(su))$.
For $z \in (Z^\circ s)^\reg \cap U_\lambda$, we  show that $\lambda(zu)$ is Jordan equivalent to $\lambda(su)$.
Recall that we know the Jordan decomposition of such elements thanks to Lemma  \ref{lem_ss}.

We first deal with the semisimple part.
We have $M = G_z^\circ$ and thanks to  Remark \ref{rk_etale} we derive also $M = G_{\lambda(s)}^\circ = G_{\lambda(z)}^\circ$.
In particular $\m = \g_{\lambda(z)} = \g_{\lambda(s)}$.

For the nilpotent part, we define  the $M$-equivariant morphism 
\begin{align*}
\nu \colon M \times Z^\circ s &\to \N(\m) \\
(g, z) & \mapsto g \cdot (\lambda(zu) - \lambda(z)) = \lambda(z gug^{-1}) - \lambda(z),
\end{align*}
which is well-defined by Lemma \ref{lem_ss}, in particular the Jordan decomposition $\lambda(zu) = \lambda(z) + \nu(e,z)$ holds in $\m$ for all $z \in Z^\circ s$.
Here $M$ acts on the domain of $\nu$ by left multiplication on the first factor and on the codomain by the adjoint action.
The closure $\overline{\image \nu}$ is an irreducible $M$-stable subset of $\N(\m)$, hence $\overline{\image \nu} = \overline{\orb}$ for some nilpotent $M$-orbit $\orb \subset \m$.
Fix $d := \dim \orb$, then $\orb = \overline{\image \nu} \cap \m_{(d)}$ and $M$-equivariance implies $\orb \subset \image \nu$ so that $\orb = \image \nu \cap \m_{(d)}$.
We prove that $\varnothing \neq \nu (M \times ( Z^\circ s)^\reg \cap U_\lambda )) \subset \m_{(d)}$.
We  have $s \in (Z^\circ s)^\reg \cap U_\lambda$, proving non-emptiness.
Let $z \in (Z^\circ s)^\reg \cap U_\lambda$.
Then the \'etale property of $\lambda$ on $U_\lambda$ implies: $$\dim G \cdot (\lambda(z) + \nu(e,z)) = \dim G \cdot (zu) = \dim G \cdot (su) = \dim G \cdot (\lambda(s) + \nu(e,s)).$$
This equality yields 
$\dim G_{\lambda(z)} \cdot \nu(e,z) = \dim G_{\lambda(s)} \cdot \nu(e,s)$ which in turn implies
$\dim M \cdot \nu(e,z) = \dim M \cdot \nu (e,s)$, whence
$M \cdot \nu(e,z) \subset \m_{(d')}$
with $d' = \dim G \cdot (su) - \dim G \cdot s = \dim M \cdot u$.
The set $(Z^\circ s)^\reg \cap U_\lambda$ is open in $Z^\circ s$ so $\nu(M \times ((Z^\circ s)^\reg \cap U_\lambda))$ is dense in $\image \nu$ and must meet $\orb$ nontrivially, which implies $\orb = \nu(M \times ((Z^\circ s)^\reg \cap U_\lambda))$ so that $d' = d$ and we conclude.
In particular $\J(\lambda(su)) = \J(\m, \orb)$.

To prove (2), recall that $J(su)$ closes at the identity if and only if $Z^\circ s = Z^\circ$.
We already observed in (1) that $d' = \dim M \cdot u$.
Hence $d' = \dim M \cdot \lambda(u)$ since $\lambda$ induces an isomorphism $\U(M) \to \N(\m)$.
Finally we have $\lambda(u) = \nu(e,e) \in \overline{\image \nu}$, yielding
$M \cdot \lambda(u) \subset \overline{\image \nu} \cap \m_{(d')} = \orb$, hence $\J(\lambda(su)) = \J(\lambda(s) + \lambda(u))$.
\end{proof}

Two of our main results immediately follow.
\begin{Corollary} \label{cor_respect}
The restriction of the log-like map $\lambda_{\mid U_\lambda} \colon U_\lambda \to V_\lambda$ induces a map
 $$ \Lambda \colon \{  J \in \mathcal{J}(G) \mid J \cap U_\lambda \neq \varnothing \} \to \{ \J \in \mathcal{J}(\g) \mid \J \cap V_\lambda \neq \varnothing \}$$
 via the assignment $\Lambda(J(g)) = \J(\lambda(g))$ for $g \in U_\lambda$.

For $J, J'$ in the domain of $\Lambda$ and $n \in \mathbb{N}$, we have $\dim J = \dim \Lambda(J)$, $J \subset G_{(n)}$ if and only if $\Lambda(J) \subset \g_{(n)}$ and $J' \subset \overline{J}$ if and only if $\Lambda(J') \subset \overline{\Lambda(J)}$.

For a Levi subgroup $L \leq G$ and $u \in  \mathcal{U}(L)$, we have $\lambda(\Ind_L^G \O^L_u) = \Ind_{\l}^{\g} \orb^L_{\lambda(u)}$.
\end{Corollary}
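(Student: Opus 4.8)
\emph{Well-definedness and bijectivity.} The plan is to establish, in turn, that $\Lambda$ is a well-defined bijection onto $\mathcal{J}(\g)$, that it preserves dimensions and level sets, that it is an isomorphism of posets for the inclusion-of-closures order, and then to deduce the formula on induced orbits from these facts restricted to the unipotent and nilpotent loci. For well-definedness: if $g_1,g_2\in U_\lambda$ lie in one Jordan class $J$, then $g_2\in J\cap U_\lambda$, so Proposition \ref{prop_respect}(1) gives $\lambda(g_2)\in\J(\lambda(g_1))$, whence $\J(\lambda(g_1))=\J(\lambda(g_2))$; as $\lambda(g)\in V_\lambda$, the value $\J(\lambda(g))$ lies in the stated target, and since $\overline{\J}\cap\N(\g)\neq\varnothing$ for every packet, that target is all of $\mathcal{J}(\g)$. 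To see that $\Lambda$ is onto I would, given $\J\in\mathcal{J}(\g)$, pick $x\in\J\cap V_\lambda$, lift $\pi_G(x)$ along the surjective bottom arrow $U_\lambda/\!/G\to V_\lambda/\!/G$ of the Cartesian square of Definition \ref{def_loglike}, and use that this square identifies the $\pi_G$-fibre through the lift (inside $U_\lambda$) with the $\pi_G$-fibre through $x$ (inside $V_\lambda$) to obtain $g\in U_\lambda$ with $\lambda(g)=x$; then $\Lambda(J(g))=\J$. Injectivity of $\Lambda$ is the delicate point and is dealt with below.

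\emph{Dimensions and level sets.} Fix $g=su\in J\cap U_\lambda$; since $U_\lambda$ is $\pi_G$-saturated, $s\in U_\lambda$ as well. As $\lambda$ is étale on $U_\lambda$, \eqref{incl_1} and Remark \ref{rk_etale} give $G_g^\circ=G_{\lambda(g)}^\circ$, so $\dim G\cdot g=\dim G\cdot\lambda(g)$, while $\g_{\lambda(s)}=\g_s$ (Remark \ref{rk_etale}, $s$ semisimple), whence $\dim\z(\g_{\lambda(s)})=\dim\z(\g_s)=\dim Z(G_s^\circ)$. Feeding these into \eqref{dim_form} and into its packet analogue $\dim\J(x)=\dim G\cdot x+\dim\z(\g_{x_s})$ yields $\dim J=\dim\Lambda(J)$; the same two identities, together with the fact that a Jordan, resp.\ decomposition, class lies in a single level set, give $J\subseteq G_{(n)}$ if and only if $\Lambda(J)\subseteq\g_{(n)}$. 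Along the way I record, from Proposition \ref{prop_respect}(1) and Chevalley's theorem, that $\lambda(J\cap U_\lambda)$ is a constructible subset of the irreducible variety $\Lambda(J)$ of full dimension $\dim(J\cap U_\lambda)=\dim J=\dim\Lambda(J)$, hence dense; so $\overline{\lambda(J\cap U_\lambda)}=\overline{\Lambda(J)}$.

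\emph{The closure order.} One implication is soft: if $J'\subseteq\overline{J}$, pick $g'\in J'\cap U_\lambda$; then $g'\in\overline{J}\cap U_\lambda=\overline{J\cap U_\lambda}\cap U_\lambda$, so continuity of $\lambda$ and Proposition \ref{prop_respect}(1) give $\lambda(g')\in\overline{\lambda(J\cap U_\lambda)}=\overline{\Lambda(J)}$, and since $\overline{\Lambda(J)}$ is a union of packets this forces $\Lambda(J')=\J(\lambda(g'))\subseteq\overline{\Lambda(J)}$. The converse, $\Lambda(J')\subseteq\overline{\Lambda(J)}\Rightarrow J'\subseteq\overline{J}$, is the main obstacle; note that, applied with $\Lambda(J')=\Lambda(J)$, it also forces injectivity of $\Lambda$. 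Here I expect the Cartesian square of Definition \ref{def_loglike} to be essential, beyond Proposition \ref{prop_respect}: it identifies every $\pi_G$-fibre contained in $U_\lambda$ isomorphically with a $\pi_G$-fibre contained in $V_\lambda$, so that $\lambda$ carries the Jordan stratification of $U_\lambda$ onto the decomposition-class stratification of $V_\lambda$; reflecting the inclusion $\overline{\lambda(J'\cap U_\lambda)}\subseteq\overline{\lambda(J\cap U_\lambda)}$ back through $\lambda$ fibrewise should then give $\overline{J'\cap U_\lambda}\subseteq\overline{J\cap U_\lambda}$, i.e.\ $J'\subseteq\overline{J}$. Once $\Lambda$ is known to be a bijection of finite posets preserving inclusion of closures and level sets, it automatically carries regular closures to regular closures, so $\overline{\Lambda(J)}^{\reg}=\bigsqcup_{J_i\subseteq\overline{J}^{\reg}}\Lambda(J_i)$.

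\emph{Induced orbits.} For a Levi $L$ and $u\in\U(L)$ set $J_1:=J(L,Z(L)^\circ,\O^L_u)$. The closure description of Jordan classes gives $\overline{J_1}^{\reg}\cap\U(G)=\Ind_L^G\O^L_u$, and $\overline{J_1}\cap\U(G)\neq\varnothing$ forces $J_1\cap U_\lambda\neq\varnothing$, so $J_1$ is in the domain of $\Lambda$; moreover $e\in\overline{J_1}$. Picking $z_0u\in J_1\cap U_\lambda$ with $z_0$ generic in $Z(L)^\circ$, Proposition \ref{prop_respect}(2) gives $\Lambda(J_1)=\J(\lambda(z_0)+\lambda(u))$; here \eqref{incl_2} forces $\lambda(L)\subseteq\l$, so $\lambda(u)\in\N(\l)$, and by Remark \ref{rk_etale} (for the semisimple $z_0$) $\g_{\lambda(z_0)}=\l$, so $\lambda(z_0)\in\z(\l)^{\reg}$; hence $\Lambda(J_1)=\J(\l,\orb^L_{\lambda(u)})$. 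Since $\lambda$ restricts to a $G$-equivariant isomorphism $\U(G)\to\N(\g)$, $\lambda(\Ind_L^G\O^L_u)$ is a single nilpotent orbit $\orb_0$; using $\Ind_L^G\O^L_u\subseteq\overline{J_1}^{\reg}\cap\U(G)$ and the compatibility of $\Lambda$ with inclusion of closures and level sets established above, I get $\orb_0\subseteq\overline{\Lambda(J_1)}^{\reg}\cap\N(\g)$, which equals $\Ind_\l^\g\orb^L_{\lambda(u)}$ by the formula $\overline{\J(x)}^{\reg}\cap\N(\g)=\Ind_\l^\g\orb^L_{x_n}$ recalled in the excerpt; both being single $G$-orbits, $\orb_0=\Ind_\l^\g\orb^L_{\lambda(u)}$. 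The genuine difficulty throughout is the converse closure implication — equivalently, the injectivity of $\Lambda$ — which must exploit the Cartesian structure of the log-like map and not only Proposition \ref{prop_respect}.
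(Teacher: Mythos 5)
Most of what you prove is sound and follows the route the paper intends (the corollary is meant to fall out of Proposition \ref{prop_respect} plus the \'etale properties): well-definedness of $\Lambda$ from Proposition \ref{prop_respect}(1); the level-set statement from $G_g^\circ=G_{\lambda(g)}^\circ$ on $U_\lambda$ (Remark \ref{rk_etale}); the implication $J'\subset\overline J\Rightarrow\Lambda(J')\subset\overline{\Lambda(J)}$ from density of $J\cap U_\lambda$ in $J$ and continuity; and the induced-orbit formula from Proposition \ref{prop_respect}(2) together with $\overline{J}^\reg\cap\U(G)=\Ind_L^G\O^L_u$ and $\overline{\J}^\reg\cap\N(\g)=\Ind_\l^\g\orb^L_{\lambda(u)}$, with the single-orbit comparison at the end. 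One small caveat: your dimension computation silently uses $\dim Z(G_s^\circ)=\dim\z(\g_s)$, which is exactly the kind of positive-characteristic identity that needs a word here (it does hold in the settings where log-like maps are actually constructed, but not for an arbitrary reductive centralizer).

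The genuine gap is the one you flag yourself: the converse implication $\Lambda(J')\subset\overline{\Lambda(J)}\Rightarrow J'\subset\overline J$ is not proved --- ``reflecting the inclusion back through $\lambda$ fibrewise'' is a hope, not an argument --- and the mechanism you propose for it, injectivity (indeed bijectivity) of $\Lambda$, is false. The paper's own analysis of $\SL_n(k)$ in section \ref{sub_SLn} shows that $\lambda$ merges central translates: for $u$ regular unipotent and $z_1\neq z_2\in Z(\SL_n(k))$ the distinct Jordan classes $z_1\O^G_u$ and $z_2\O^G_u$ both lie in the domain of $\Lambda$ (every Jordan class of $\SL_n(k)$ meets $U_\lambda$) and are both sent to the packet of the regular nilpotent orbit, since $\lambda(z_iu)=z_i\lambda(u)\in G\cdot\lambda(u)$. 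So no argument can yield injectivity, and the same example shows the Cartesian square of Definition \ref{def_loglike} cannot give the converse by a purely formal fibrewise reflection: $\lambda$ identifies classes that are incomparable in the closure order (here $z_2\O^G_u\cap z_1\overline{\O^G_u}=\varnothing$ while the image packets coincide, so $\Lambda(J')\subset\overline{\Lambda(J)}$ holds with $J'\not\subset\overline J$). Any treatment of this direction must use more than the image packet --- e.g.\ keep track of the datum $(G_s^\circ,\,Z(G_s^\circ)^\circ s)$ and use saturation of $U_\lambda$ to reduce to classes whose closures meet at a common semisimple point --- or the stated equivalence must be read with a corresponding restriction; as it stands, this half of your proposal does not go through.
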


\begin{Corollary} \label{cor_se}
Let  $u \in \U(G)$ and let $J= J(L, Z(L)^\circ, \O^L_v)$ be a Jordan class in $G$ s.t. $u \in \overline{J}$, resp. $u \in \overline{J}^\reg$. Then 
$(\overline{J}, u) \sim_{\se} (\overline{\J(\l, \orb^L_{\lambda(v)})}, \lambda(u))$, resp. $(\overline{J}^\reg, u) \sim_{\se} (\overline{\J(\l, \orb^L_{\lambda(v)})}^\reg, \lambda(u))$.
\end{Corollary}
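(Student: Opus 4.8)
The plan is to combine the reduction result of Theorem \ref{th_ACE} with the Jordan-class preservation property from Proposition \ref{prop_respect} and Corollary \ref{cor_respect}. First I would apply Theorem \ref{th_ACE} at the point $rv = u$, noting that since $u \in \U(G)$ its semisimple part is $e$, so $M := G_e = G$ and the reduction is of a special (trivial) kind: one does not leave the group $G$. Concretely, with $r = e$ the right-hand side of \eqref{eq_clos} reads $(\bigcup_{i \in I_{J,u}} \overline{J_{M,i}}, u)$ with $M = G$, but in fact we want to pass all the way to the Lie algebra side, so the better strategy is to invoke Corollary \ref{cor_se}'s companion Corollary \ref{cor_respect} directly rather than re-deriving the slice argument.

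The key steps, in order: (1) observe $u \in \overline{J}$ forces $J$ to be of the form $J(L, Z(L)^\circ, \O^L_v)$ with $L$ a Levi subgroup, by the closure description of Jordan classes recalled in \S\ref{s_jordan} (this is exactly the hypothesis, so nothing to prove), and then $J \cap U_\lambda \neq \varnothing$ by the Remark preceding Remark \ref{rk_etale}; (2) set $\J := \Lambda(J) = \J(\lambda(v))$; by Proposition \ref{prop_respect}(2), since $e \in \overline{J}$ we have $\J = \J(\lambda(e) + \lambda(v)) = \J(0 + \lambda(v)) = \J(\l, \orb^L_{\lambda(v)})$, using Lemma \ref{lem_ss} to identify $\lambda(e) = 0$ (as $e$ is semisimple and central, $\lambda(e)$ is a semisimple element fixed by all of $G$, hence lies in $\z(\g)$; but one also checks $\lambda(e)=0$ directly since $G_e = G \subset G_{\lambda(e)}$ and $G$-equivariance forces $\lambda(e)$ to be $\Ad$-fixed — actually $\lambda(e)=0$ because $e$ lies in the closure of every unipotent orbit and $\lambda$ maps $\U(G)$ isomorphically to $\N(\g)$ sending the trivial orbit to $\{0\}$); (3) now run Theorem \ref{th_ACE} with $r = e$, $v = u$, $M = G$: the index set $I_{J,u}$ consists of Jordan classes $J_{G,i} \in \mathcal{J}(G)$ with $J_{G,i} \cap J \neq \varnothing$ and $u \in \overline{J_{G,i}}$, which forces $J_{G,i} = J$, so $|I_{J,u}| = 1$ and \eqref{eq_clos} degenerates to $(\overline{J}, u) \sim_{\se} (\overline{J}, u)$, giving nothing new — hence the slice reduction alone is insufficient and the Lie-algebra passage must come from the log-like map.

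So the actual engine is: transport the smooth-equivalence class of $(\overline{J}, u)$ along the étale map $\lambda_{\mid U_\lambda}$. Precisely, restrict $\overline{J}$ to the saturated open $\overline{J} \cap U_\lambda$ (smoothly equivalent to $(\overline{J}, u)$ by openness, using that $U_\lambda$ is saturated and $u \in U_\lambda$), then use that $\lambda$ restricted to $U_\lambda$ is étale onto $V_\lambda$ with $\lambda(\overline{J} \cap U_\lambda) = \overline{\Lambda(J)} \cap V_\lambda = \overline{\J} \cap V_\lambda$ (the equality of images follows from Corollary \ref{cor_respect}: $\lambda$ sends $J \cap U_\lambda$ into $\J \cap V_\lambda$, is dominant onto it by dimension count $\dim J = \dim \J$, and respects closure order; a saturation/surjectivity argument upgrades this to equality of the closures intersected with $V_\lambda$). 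An étale morphism is in particular smooth, so $(\overline{J} \cap U_\lambda, u) \sim_{\se} (\overline{\J} \cap V_\lambda, \lambda(u)) \sim_{\se} (\overline{\J}, \lambda(u))$, and composing the chain yields the first assertion. For the regular-locus statement, repeat verbatim replacing closures by regular closures, using the compatibility of $\Lambda$ with level sets (again Corollary \ref{cor_respect}, $J \subset G_{(n)} \iff \Lambda(J) \subset \g_{(n)}$) so that $\overline{J}^\reg \cap U_\lambda$ maps onto $\overline{\J}^\reg \cap V_\lambda$ under the same étale map, and $u \in \overline{J}^\reg \Rightarrow \lambda(u) \in \overline{\J}^\reg$.

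The main obstacle I anticipate is step (3)'s image identification: showing $\lambda(\overline{J} \cap U_\lambda) = \overline{\J} \cap V_\lambda$ rather than merely $\subseteq$. The subtlety is that $\lambda$ is only étale on $U_\lambda$, not globally, and categorical quotients in positive characteristic do not commute with taking closed subsets (as the excerpt itself warns regarding $\pi_{X,H}(Y)$ versus $Y/\!/G$). The clean way around it is to use the Cartesian square \eqref{eq_loglikecd} defining a log-like map: $U_\lambda = V_\lambda \times_{V_\lambda /\!/ G} (U_\lambda /\!/ G)$, so $\pi_G$-saturated closed subsets of $U_\lambda$ correspond exactly to $\pi_G$-saturated closed subsets of $V_\lambda$, and $\overline{J} \cap U_\lambda$ is saturated because $\overline{J}$ is $G$-stable closed and $U_\lambda$ is saturated; its image is then automatically the saturated closed set $\overline{\J} \cap V_\lambda$ once one knows the two agree on a dense subset, which is the dimension-plus-closure-order input from Corollary \ref{cor_respect}. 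I would spell this saturation argument out carefully, as it is the one place where the positive-characteristic behaviour genuinely differs from the complex-analytic exponential-map argument of \cite[\S 5]{ACE}.
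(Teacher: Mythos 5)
Your overall route --- restrict to the saturated \'etale locus $U_\lambda$ and transport the local geometry of $\overline{J}$ along $\lambda$ using Proposition \ref{prop_respect} and Corollary \ref{cor_respect} --- is exactly what the paper leaves implicit, but two of your steps fail as written. First, the identification of $\J:=\Lambda(J)$ in your step (2) misapplies Proposition \ref{prop_respect}(2): that statement concerns an element $su\in J\cap U_\lambda$, whereas $v$ does not lie in $J$ unless $L=G$ (every element of $J$ has semisimple part conjugate into $(Z(L)^\circ)^\reg$, not equal to $e$). Worse, the chain $\J(\lambda(e)+\lambda(v))=\J(0+\lambda(v))=\J(\l,\orb^L_{\lambda(v)})$ is false: the decomposition class of the nilpotent element $\lambda(v)$ is $\J(\g,\orb^G_{\lambda(v)})$, which differs from $\J(\l,\orb^L_{\lambda(v)})$ whenever $L\neq G$. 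The correct derivation picks $g=zw\in J\cap U_\lambda$ with $z\in(Z(L)^\circ)^\reg$ and $w\in\O^L_v$; since $e\in\overline{J}$, Proposition \ref{prop_respect}(2) gives $\Lambda(J)=\J(\lambda(z)+\lambda(w))$, Remark \ref{rk_etale} gives $\g_{\lambda(z)}=\g_z=\l$, and $L$-equivariance gives $\lambda(w)\in\orb^L_{\lambda(v)}$, whence $\Lambda(J)=\J(\l,\orb^L_{\lambda(v)})$.

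Second, the transport step is not justified: knowing that $\lambda$ is \'etale on $U_\lambda$ and maps $\overline{J}\cap U_\lambda$ into (or even onto) $\overline{\J}\cap V_\lambda$ does not make the restricted map $\overline{J}\cap U_\lambda\to\overline{\J}\cap V_\lambda$ smooth at $u$; the restriction of an \'etale map to a closed subvariety need not be flat onto its image (think of one irreducible piece of a preimage mapping onto a reducible target --- and this is precisely the situation that must be excluded, since otherwise the asserted smooth equivalence itself could fail, e.g.\ $\overline{J}$ smooth at $u$ while $\overline{\J}$ has several branches at $\lambda(u)$). What is needed is the set-theoretic identity $\overline{J}\cap U_\lambda=(\lambda_{\mid U_\lambda})^{-1}(\overline{\J}\cap V_\lambda)$, after which \'etale base change makes the restriction \'etale and the chain of smooth equivalences goes through. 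Your proposed repair via saturation does not work: a closed $G$-stable subset need not be $\pi_G$-saturated, and $\overline{J}\cap U_\lambda$ is generally not saturated, since the fibre of $\pi_G$ through $e$ is all of $\U(G)$ while $\overline{J}\cap\U(G)=\overline{\Ind_L^G\O^L_v}$ is usually a proper subset of it. The preimage identity follows instead from the two directions of Corollary \ref{cor_respect}: if $x\in U_\lambda$ and $\lambda(x)\in\overline{\J}$, then $\Lambda(J(x))=\J(\lambda(x))\subset\overline{\Lambda(J)}$ (closures of decomposition classes are unions of classes), hence $J(x)\subset\overline{J}$; the reverse inclusion is the one you already have. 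The regular-locus case is then obtained by intersecting with level sets and using the compatibility $J\subset G_{(n)}\Leftrightarrow\Lambda(J)\subset\g_{(n)}$. With these two repairs your argument coincides with the one the paper intends when it says the corollary follows immediately from Proposition \ref{prop_respect} and Corollary \ref{cor_respect}.
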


\subsubsection{Analogies and differences in positive characteristic} \label{sss_summary}
Assume that $G$ has simply connected derived subgroup and admits a log-like map and let $J \in \mathcal{J}(G)$. 
Then one can perform a local study of a point $rv$ in $\overline{J}$ or $\overline{J}^\reg$ first applying Theorem \ref{th_ACE} and then Corollary \ref{cor_se} and Remarks \ref{rmk_mcninch} or \ref{rmk_ambrosiomcninch}, thus generalizing \cite{ACE}.
Nevertheless, 
two obstacles arise in the concrete application of the procedure.
Firstly, in \cite{ACE} we could apply the parametrization in terms of the Weyl group of $G$ to deduce a sufficient condition to have $\lvert I_{J, rv} \rvert = 1$, cf. the notation $I_{J, rv}$  from section \ref{s_redunip}.
Unfortunately the proof of \cite[Lemma 4.8]{ACE} relies on the use of analytic isomorphisms and on the fact that the categorical quotient behaves well with respect to closed $G$-stable subvarieties, unavailable in the general characteristic-free setting.
Secondly, the geometry of Jordan classes of Lie algebras of reductive groups in positive characteristic has not  been investigated in general yet.

To conclude we gather some applications, examples and remarks about log-like maps. 
As expected, the cases $\GL_n(k)$ with $p$ arbitrary and $\SL_n(k)$ with $(p,n) = 1$ are well-behaved. 
In section \ref{ss_sep} we show that for $p = 2$, closures of Jordan classes in $\SL_2(k)$  fail to mirror geometric properties of decomposition classes closures in $\sl_2(k)$. 
This suggests that the (quite common) assumptions required to obtain the main results in the manuscript cannot be relaxed too much.

\subsection{The general linear group} \label{sub_GLn}
Let $G = \GL_n(k)$ and $p$ be arbitrary. 
By means of our methods we can reobtain the following result from \cite[3.3]{LusztigCCRG}, see also \cite{BroerDecoVar, Peterson}.

\begin{Theorem} \label{thm_GLn}
Let $G = \GL_n(k)$ and suppose  $J = J(L, Z(L), \O^L) \in \mathcal{J}(G)$.
\begin{enumerate}
\item The closure $\overline{J}$ is normal if and only if $[L,L]$ is of type $d \sf{A}_\ell$ with $d \ell = n$ and $\O^L$ has the same Jordan normal form in each one of the $d$ blocks.
\item Suppose furthermore that $\overline{J}^\reg$ is a sheet in $G$. Then it is smooth.
\end{enumerate}
\end{Theorem}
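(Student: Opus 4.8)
The plan is to transfer the problem, via Corollary \ref{cor_se}, from $\GL_n(k)$ to closures of decomposition classes in $\gl_n(k)$, and then to invoke the classical description of the latter.

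First I would set up the reduction. For $G=\GL_n(k)$ the isolated semisimple elements are exactly the scalar matrices, so the centre $Z(G)=\{cI_n : c\in k^\times\}$ is the unique closed Jordan class of $G$, and it lies in $\overline{J}$ whenever $J=J(L,Z(L),\O^L)$ with $L$ a Levi (equivalently $\overline{J}\cap\U(G)\neq\varnothing$), so that $I_n\in\overline{J}$; likewise the only Jordan class closed in $\overline{J}^\reg$ is $Z(G)\cdot\Ind_L^G\{e\}$, with representative a Richardson unipotent element $v$ of $L$. Hence, by Remark \ref{rmk_geom}, $\overline{J}$ is normal (resp. $\overline{J}^\reg$ is smooth) if and only if it is so at $I_n$ (resp. at $v$). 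Now $\GL_n(k)$ carries the log-like map $\lambda(g)=g-I_n$ (Example \ref{ex_loglike}(1)), whose restriction to any Levi subgroup is again $g\mapsto g-I_n$ (Remark \ref{rmk_ambrosiomcninch}); hence $\lambda$ sends a unipotent element of $L$ to the nilpotent element of $\l$ with the same Jordan type in each block, and Corollary \ref{cor_se} gives
\[
(\overline{J},I_n)\sim_{\se}\bigl(\overline{\J(\l,\orb^L_{n})},\,0\bigr),\qquad (\overline{J}^\reg,v)\sim_{\se}\bigl(\overline{\J(\l,\orb^L_{n})}^\reg,\,\lambda(v)\bigr),
\]
where $\orb^L_n\subset\l$ is the nilpotent orbit with the same block-wise Jordan type as $\O^L$. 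Moreover, since the non-normal (resp. non-smooth) locus of a decomposition variety is closed and $G$-stable, it meets every nonempty such locus already at $0$, so $\overline{\J(\l,\orb^L_n)}$ is normal at $0$ if and only if it is normal. Thus part (1) becomes: $\overline{\J(\l,\orb^L_n)}$ is normal if and only if the stated condition on $L$ and $\O^L$ holds.

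For part (1) I would then quote the classical analysis of decomposition varieties in $\gl_n$ (Borho--Kraft \cite{BK79}, Broer \cite{BroerDecoVar}): writing $\l=\bigoplus_i\gl_{n_i}$ and letting $\pi_i$ be the partition of $n_i$ describing $\orb^L_n$ on the $i$-th block, the closure $\overline{\J(\l,\orb^L_n)}$ is normal precisely when all the pairs $(n_i,\pi_i)$ coincide, i.e. all $n_i$ equal some common $\ell$ and all $\pi_i$ are equal. Translating back through $\orb^L_n$, this is exactly the condition that $[L,L]$ be of type $d{\sf A}_\ell$ with $d\ell=n$ and that $\O^L$ have the same Jordan normal form in each of the $d$ blocks.

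For part (2) I would first observe that if $\overline{J}^\reg$ is a sheet of $G$, then $J$ is maximal in $\mathcal{J}(G)$ (the dense Jordan class of a sheet being unique and maximal), so $\O^L$ cannot be obtained by induction from a proper Levi subgroup of $L$; as every non-trivial unipotent class of a product of general linear groups is a Richardson class, this forces $\O^L=\{e\}$, whence $\orb^L_n=\{0\}$. By the reduction above, $\overline{J}^\reg$ is then smoothly equivalent, at its unique closed Jordan class, to the sheet $\overline{\J(\l,0)}^\reg$ of $\gl_n$; and sheets of $\gl_n$ are smooth — this is classical (see \cite[3.3]{LusztigCCRG}, \cite{BroerDecoVar, Peterson}) — so $\overline{J}^\reg$ is smooth.

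The main obstacle is not the reduction itself: Corollary \ref{cor_se} does the conceptual work almost for free, and the bookkeeping (identifying the closed Jordan classes of $\GL_n(k)$ inside $\overline{J}$ and $\overline{J}^\reg$, and matching unipotent elements with their nilpotent images under $\lambda$) is routine structure theory of $\GL_n$. The real input is the description of decomposition varieties in $\gl_n$ — the normality criterion and the smoothness of sheets. Over $\C$ these are classical; in arbitrary characteristic they should hold for the same reasons, as the relevant varieties are cut out in $\Mat_n(k)$ by rank conditions and their local structure is insensitive to $p$, but carefully transcribing (or reproving) these $\gl_n$-statements in positive characteristic — rather than the group-theoretic reduction — is where the effort lies.
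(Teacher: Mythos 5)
Your proof follows essentially the same route as the paper: reduce to the unique closed Jordan class of $\overline{J}$ (resp. $\overline{J}^\reg$) via Remark \ref{rmk_geom}, transfer through the log-like map $g \mapsto g - I_n$ using Corollaries \ref{cor_respect} and \ref{cor_se} to the decomposition variety $\overline{\J(\l,\lambda(\O^L))}$ at $0$ (resp. its regular closure at $\lambda(u)$), and then quote the known description of decomposition varieties in $\gl_n$. The only differences are minor bookkeeping: the paper simply asserts $\O^L=\{e\}$ in part (2) where you argue via rigidity, and it concludes both parts by citing \cite[Proposition 9.2]{BroerDecoVar} for the $\gl_n$ input whose characteristic-free validity you rightly flag as the real content.
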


\begin{proof}
Let $\lambda$ be the log-like map  from Example \ref{ex_loglike} (1).
For (1), remark that there is a unique closed Jordan class in $\overline{J}$, namely $J(e) = J(G, Z(G), \{e\})$. By Remark \ref{rmk_geom} it is enough to check normality of $\overline{J}$ at $e$.
Then Corollaries \ref{cor_respect} and \ref{cor_se} imply that $(\overline{J}, e) \sim_{\se} (\overline{\J}, 0)$ where $\J = \J(\l, \lambda(\O^L))$. We conclude by \cite[Proposition 9.2]{BroerDecoVar}.

For (2), we have $\O^L = \{e\}$ and the unique closed Jordan class in $\overline{J}^\reg$ is $J(G, Z(G), \O^G)$, where $\O^G := \Ind_L^G \{e\}$. Let $u \in \O^G$,
by Corollaries \ref{cor_respect} and \ref{cor_se}  we have $(\overline{J}^\reg, u) \sim_{\se} (\overline{\J}^\reg, \lambda(u))$ where $\J = \J(\l, \{ 0 \})$.
 Finally $\overline{\J}^\reg$ is a sheet in $\g$, hence smooth, by \cite[Proposition 9.2]{BroerDecoVar}.
\end{proof}

\subsection{The special linear group in very good characteristic} \label{sub_SLn}
Let $G = \SL_n(k)$ with $(n,p) = 1$  and consider the log-like map $\lambda \colon G  \to \g$ given by $g \mapsto g - \frac{\tr g}{n}I_n$, as per Example \ref{ex_loglike} (2).

The map $\lambda$ is flat by miracle flatness (\cite[Theorem 23.1]{Matsumura}): domain and codomain are smooth varieties and $\lambda$ has finite fibres.
Computing the \'etale locus amounts therefore to determining the ramification locus.
The degree of the map is generically $n$: for $x \in \g$ one has $\lambda^{-1}(x) = \{ x + t I_n \mid \det(x + t I_n) = 1\}$.
The polynomial $p_x := \det(x + t I_n) - 1 \in k[t]$ is monic of degree $n$ and admits $n$ distinct solutions if and only if the resultant $R := \Res(p_x(t), p'_x(t)) \neq 0$.
Remark that $R \in k[\g]^G$,
hence $R(x) \neq 0$ if and only if $\lambda^{-1}(x)$ consists of exactly $n$ elements.
Since $G$ and $\g$ are irreducible varieties, the ramification locus of $\lambda$ is determined by the vanishing locus of $R$ in $\g$.
So we can choose $U_\lambda = G \setminus \mathcal{Z}(\lambda^*(R))$ and $V_\lambda = \g \setminus \mathcal{Z}(R)$ so that $\lambda_{\mid U_\lambda} \colon U_\lambda \to V_\lambda$ is an $(n:1)$ \'etale covering.

Observe that $\lambda^{-1}(0) = Z(G)$.
By saturation of $U_\lambda$, we deduce $Z(G) \mathcal{U}(G) \subset U_\lambda$.
Thanks to the description of closures of Jordan classes, we observe that each $J \in \mathcal{J}(G)$ satisfies $\overline{J} \cap Z(G) \neq \varnothing$, whence  $J \cap U_\lambda \neq \varnothing$.
Let $su \in U_\lambda$ and set $M := G_s$ and $Z := Z(M)$; then $M$ is a Levi subgroup of $G$ and $Z^\circ s = Z^\circ z$ for some $z \in Z(G)$, see for example \cite[Remark 3.7]{AC}.

We have $\lambda(J(M, Z(M)^\circ s, \O^M_u) \cap U_\lambda)  \subset \J(\m, \orb^M_{\lambda(u)})$.
Indeed, the Jordan class $J = z J(M, Z^\circ, \O^M_u)$ closes at $z$ and
since $\lambda(z) = 0$ we have $\lambda(zu) = zu - \frac{\tr(z)}{n} I_n = zu - z = z(u-I_n) \in k^\times (u-I_n) = k^\times \lambda(u) \subset M \cdot \lambda(u)$.
With the  dimension argument used to prove Proposition 
\ref{prop_respect} one obtains the claim.

\begin{Proposition} \label{prop_sln}
Let $G = \SL_n(k)$ and $(p,n) =1$.
Let $J = J(L, Z(L)^\circ z, \{e\}) \in \mathcal{J}(G)$.
The sheet $\overline{J}^\reg$ is smooth (resp. normal) if and only if the  sheet $\overline{J(\l, \{0\})}^\reg$ in $\g$ is smooth (resp. normal).
\end{Proposition}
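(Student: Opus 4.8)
The plan is to transport the problem through the log-like map $\lambda(g) = g - \tfrac{\tr g}{n}I_n$ to a single nilpotent point in the closed stratum of the sheet $\overline{\J(\l,\{0\})}^\reg$, exactly as in the proof of Theorem \ref{thm_GLn}; the only genuinely new point, compared with the $\GL_n$ case, is caused by the finiteness of $Z(G)$. Recall from the discussion preceding the statement that $z$ may be taken in $Z(G)$.

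\emph{Normalising the coset.} Since $z \in Z(G)$, left translation by $z$ is a variety isomorphism of $G$ sending $J_0 := J(L, Z(L)^\circ, \{e\})$ to $J$, hence identifying $\overline{J_0}^\reg$ with $\overline{J}^\reg$; as this preserves smoothness and normality, we may replace $J$ by $J_0$. Here $\overline{J_0} \cap \U(G) \neq \varnothing$ and, by the closure formula of Section \ref{s_jordan}, $\overline{J_0}^\reg \cap \U(G) = \Ind_L^G\{e\}$.

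\emph{Reducing to one point.} By Remark \ref{rmk_geom} it suffices to test smoothness (resp.\ normality) of $\overline{J_0}^\reg$ at a representative of each Jordan class closed in $\overline{J_0}^\reg$; by Section \ref{s_jordan} these are the classes $J(rv)$ with $r$ isolated, hence with $r \in Z(G)$ (as $G$ is simple and $p \nmid n$, the isolated semisimple elements are precisely the central ones). Inspecting the closure formula, such an $r$ --- being a semisimple part of an element of $\overline{J_0}^\reg$ --- must lie in $Z(G) \cap Z(L)^\circ$, and then the unipotent part lies in $\Ind_L^G\{e\}$; so the closed classes are exactly the $\zeta\Ind_L^G\{e\}$ with $\zeta \in Z(G)\cap Z(L)^\circ$. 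Since each such $\zeta$ is central in $G$, translation by $\zeta$ fixes every centraliser, hence fixes $J_0$ and $\overline{J_0}^\reg$, giving $(\overline{J_0}^\reg, \zeta u) \cong (\overline{J_0}^\reg, u)$ for $u \in \Ind_L^G\{e\}$; thus it is enough to work at one such $u$. On the Lie algebra side, $\z(\g) = 0$ because $p \nmid n$, so by Section \ref{s_jordan} the unique packet closed in $\overline{\J(\l,\{0\})}^\reg$ is $\Ind_\l^\g\{0\}$, and by the decomposition-class analogue of Remark \ref{rmk_geom} it suffices to test that sheet at one point of $\Ind_\l^\g\{0\}$.

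\emph{Comparison and conclusion.} Since $\lambda(e) = I_n - \tfrac{\tr I_n}{n}I_n = 0$, Corollaries \ref{cor_respect} and \ref{cor_se} applied to $J_0 = J(L, Z(L)^\circ, \orb^L_e)$ and $u \in \overline{J_0}^\reg \cap \U(G)$ yield $(\overline{J_0}^\reg, u) \sim_{\se} (\overline{\J(\l,\orb^L_{\lambda(e)})}^\reg, \lambda(u)) = (\overline{\J(\l,\{0\})}^\reg, \lambda(u))$, while $\lambda(u) \in \lambda(\Ind_L^G\{e\}) = \Ind_\l^\g\{0\}$ by Corollary \ref{cor_respect}. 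As $\sim_{\se}$ preserves smoothness and normality, chaining the equivalences of the previous two paragraphs gives the proposition. The step I expect to demand the most care is the bookkeeping in the "reducing to one point" paragraph: pinning down the Jordan classes closed in $\overline{J_0}^\reg$ and checking that, although they now form a nontrivial $(Z(G)\cap Z(L)^\circ)$-torsor over $\Ind_L^G\{e\}$ rather than a single class as for $\GL_n$, all of them are geometrically equivalent at the relevant unipotent points. Everything else is a direct transcription of Sections \ref{s_redunip} and \ref{ss_compatible} and of the proof of Theorem \ref{thm_GLn}.
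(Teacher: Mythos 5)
Your proof is correct and follows essentially the same route as the paper: reduce via Remark \ref{rmk_geom} to a point of the closed stratum and then apply Corollaries \ref{cor_respect} and \ref{cor_se} to the log-like map $g \mapsto g - \tfrac{\tr g}{n}I_n$, transporting the question to a nilpotent point of $\overline{\J(\l,\{0\})}^\reg$. The only divergence is cosmetic: the paper works directly at the point $zu$, using the preliminary discussion ($\lambda(zu) \in k^{\times}\lambda(u)$) to absorb the central factor and asserting a single closed Jordan class, whereas you first translate by $z^{-1}$ and then deal with the possibly several closed classes $\zeta\,\Ind_L^G\{e\}$, $\zeta \in Z(G)\cap Z(L)^\circ$, via central translations preserving $\overline{J_0}^\reg$ --- slightly more careful bookkeeping of the same argument.
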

\begin{proof}
Since $L$ is a Levi subgroup we can assume $z \in Z(G)$.
As in the proof of Theorem \ref{thm_GLn} (2), the unique closed Jordan class in $\overline{J}^\reg$ is $J(G, Z(G)^\circ z, \Ind_L^G \{e\}) = z \Ind_L^G \{e\}$.
Fixing $\lambda$ as in the current subsection and $u \in \Ind_L^G \{e\}$ we have $(\overline{J}^\reg, zu) \sim_{\se} (\overline{\J (\l, \{0 \})}^\reg, \lambda(u))$, using Corollaries \ref{cor_respect}, \ref{cor_se} and the preliminary discussion in this subsection.
\end{proof}

\subsection{Outside the \'etale locus} \label{sub_outside}
In this subsection we assume that $p$ is good for $G$. Then for a semisimple $x_s \in \g$, the connected centralizer $G_{x_s}^\circ$ is a Levi subgroup of $G$.
Let $s \in G$ semisimple with $G_s^\circ$ not a Levi subgroup\footnote{Such subgroups always occur unless $[G,G]$ is a product of simple subgroups of type $\sf A$.}.
Put $M := G_s^\circ, Z := Z(M)$, and define the group $L := C_G(Z^\circ)$; then $L$ is the minimal Levi subgroup of $G$ containing $M$.
Hence the chain of inclusions $M \subsetneq L \subset  G_{\lambda(s)}^\circ$ holds.
In this situation, we obtain Jordan classes in $G$ which never meet the locus of $G$ where $\lambda$ is \'etale, thus $\lambda$ does not behave well with respect to the Jordan stratification outside this locus, as the following example illustrates.

\begin{Example}
Let $p \neq 2$ and  $G =\Sp_4(k) = \{ A \in \GL_4(k) \mid A^T J A = J \}$ where $J = \left( \begin{smallmatrix} 0 & I_2 \\ -I_2 & 0 \end{smallmatrix} \right)$, then a log-like map $\lambda \colon G  \to \g$ can be written down explicitly via the procedure illustrated in Example \ref{ex_loglike} (2), where we choose $\rho \colon G \to \GL_4(k)$ to be the natural representation.
Let $T$ be the  torus of diagonal matrices in $G$, the restriction of $\lambda$ to $T$ acts as follows
\begin{align*}
 \left( \begin{smallmatrix} a & 0 & 0 & 0 \\ 
0 & b & 0 & 0 \\ 
0 & 0 & a^{-1} & 0 \\ 
0 & 0 & 0 & b^{-1}  \end{smallmatrix} \right) \mapsto  
 \frac{1}{2} \left(
\begin{smallmatrix}a-a^{-1} & 0 & 0 & 0 \\ 
0 & b-b^{-1} & 0 & 0 \\ 
0 & 0 & a^{-1}-a & 0 \\ 
0 & 0 & 0 & b^{-1}-b  \end{smallmatrix} \right)
\end{align*}
where $a, b \in k^\times$.

In particular, $G$ admits a nontrivial semisimple isolated conjugacy class $G \cdot s = J(s) = \overline{J(s)}$ with $s := \left(
\begin{smallmatrix} 1 & 0 & 0 & 0 \\ 
0 & -1 & 0 & 0 \\ 
0 & 0 & 1 & 0 \\ 
0 & 0 & 0 & -1 \end{smallmatrix} \right)$.
The centralizer  $G_s^\circ = G_s \cong \SL_2(k) \times \SL_2(k)$ is not a Levi factor in $G$.
Nonetheless $\lambda(s) = 0$ and $\lambda(J(s)) = \J(\lambda(s)) = \{0\}$. Observe that $G_{\lambda(s)} = G_{\lambda(s)}^\circ = G$, which is the minimal Levi subgroup of $G$ containing $G_s^\circ$.
The  element $s$ (hence $G \cdot s$) is not contained in the \'etale locus of $\lambda$, since $\dim G \cdot s > \dim  \lambda(G \cdot s)$.
\end{Example}

It would be interesting to know if the following more general question has a positive answer.
Assume $G$ be a reductive group, $p$ is good for $G$ and suppose there exists a log-like map  $\lambda \colon G \to \g$.
If $s \in G$ is semisimple, then is $G_{\lambda(s)}^\circ$ the minimal Levi subgroup in $G$ containing $G_s^\circ$?

\subsection{A counterexample in separably good characteristic} \label{ss_sep}
Suppose $p = 2$ and $G = \SL_2(k)$, let $T$ be the torus  of diagonal matrices in $G$.
Then $\g = \sl_2(k)$ is a nilpotent Lie algebra.
The centre $\z(\g)$ coincides with the set of semisimple elements of $\g$.
We observe that $p$ is separably good for $G$, so that $G$ admits a Springer isomorphism by \cite{Springer_unipotent}. 

Jordan classes of $\g$ coincide with sheets, which in turn coincide with the level sets.
We have
$\g_{(0)} = \z(\g)$  and $\g_{(2)} = \g \setminus \z(\g) = \z(\g) + \orb$, where $\orb = \N(\g)^\reg$ is the regular nilpotent orbit of $\g$.
The closures $\overline{\g_{(0)}} = \g_{(0)}$ and $\overline{\g_{(2)}} = \g$ are smooth varieties.
On the other hand, the regular unipotent class $\O = \U(G)^\reg$ is a Jordan class in $G$ and $(\overline{\O}, e) = (\U(G), e)$ is not smooth.
So there is no  candidate $\J \in \mathcal{J}(\g)$ satisfying  $(\U(G) , e) \sim_{\se} (\overline{\J}, 0)$.
In particular, there exists no log-like map for $G$.

\bibliographystyle{abbrv}
\bibliography{biblio}
\end{document}